\numberwithin{equation}{section}
\newcommand{\wt }{\widetilde}
\newcommand{\bfv}{{\mathbf v}}
\newcommand{\bfw}{{\mathbf w}}
\newcommand{\bfA}{{\mathbf A}}
\newcommand{\bfU}{{\mathbf U}}
\newcommand{\bfR}{{\mathbf R}}
\newcommand{\bfN}{{\mathbf N}}
\newcommand{\bfM}{{\mathbf M}}
\newcommand{\bfLambda}{{\boldsymbol \Lambda}}
\newcommand{\bfP}{{\mathbf P}}
\newcommand{\bfQ}{{\mathbf Q}}
\newcommand{\bfO}{{\mathbf O}}
\newcommand{\bfI}{{\mathbf I}}
\newcommand{\bfX}{{\mathbf X}}
\newcommand{\bfY}{{\mathbf Y}}
\newcommand{\bfx}{{\mathbf x}}
\newcommand{\bfu}{{\mathbf u}}
\newcommand{\bfy}{{\mathbf y}}
\newcommand{\bfB}{{\mathbf B}}
\newcommand{\bfe}{{\mathbf e}}
\newcommand{\bfb}{{\mathbf b}}
\newcommand{\bfz}{{\mathbf z}}
\newcommand{\X}{{\mathbf X}}
\newcommand{\A}{{\mathbf A}}
\renewcommand{\P}{{\mathbb P}}
\newcommand{\E}{\mathbb{E}}
\newcommand{\1}{\mathds{1}}
\newcommand{\R}{\mathbb{R}}
\newcommand{\N}{\mathbb{N}}
\newcommand{\Var}{\operatorname{Var}}
\newcommand{\Cov}{\operatorname{Cov}}
\newcommand{\norm}[1]{\|#1\|}
\newcommand{\vep}{\varepsilon}
\newcommand{\nto}{n \to \infty}
\newcommand{\tr}{\operatorname{tr}}
\newcommand{\diag}{\operatorname{diag}}
\newcommand{\lin}{\operatorname{lin}}
\newtheorem{lemma}{Lemma}[section]
\newtheorem{theorem}[lemma]{Theorem}
\newtheorem{proposition}[lemma]{Proposition}
\newtheorem{corollary}[lemma]{Corollary}
\newtheorem{remark}[lemma]{Remark}
\newcommand{\cid}{\stackrel{d}{\longrightarrow}}
\newcommand{\cip}{\stackrel{\P}{\rightarrow}}
\newcommand{\eid}{\stackrel{d}{=}}
\newcommand{\Vol}{\operatorname{Vol}}
\newcommand{\rank}{\operatorname{rank}}
\begin{document}
\date{}
\bibliographystyle{acm}
\title[Logarithmic volume of random simplices]
{The volume of random simplices from\\ elliptical distributions in high dimension}
\thanks{We would like to thank two anonymous referees for their insightful comments and remarks,
which helped us to improve our paper. AG  was supported by the DFG under Germany's Excellence Strategy  EXC 2044 -- 390685587, \textit{Mathematics M\"unster: Dynamics - Geometry - Structure}. CT has been supported by the DFG via SPP 2265 \textit{Random Geometric Systems}. We thank Nina Dörnemann and Nestor Parolya for fruitful discussions.}
\author[A. Gusakova]{Anna Gusakova}
\address{Institut für Mathematische Stochastik,
Westfälische Wilhelms-Universität Münster,
Orl\'eans-Ring 10,
48149 Münster,
Germany}
\email{gusakova@uni-muenster.de}
\author[J. Heiny]{Johannes Heiny}
\address{Department of Mathematics,
Stockholm University,
Albano hus 1,
10691 Stockholm,
Sweden}
\email{johannes.heiny@math.su.se}
\author[C. Thäle]{Christoph Thäle}
\address{Fakult\"at f\"ur Mathematik,
Ruhr-Universit\"at Bochum,
Universit\"atsstrasse 150,
D-44801 Bochum,
Germany}
\email{christoph.thaele@rub.de}
\begin{abstract}
Random simplices and more general random convex bodies of dimension $p$ in $\mathbb{R}^n$ with $p\leq n$ are considered, which are generated by random vectors having an elliptical distribution. In the high-dimensional regime, that is, if $p\to\infty$ and $n\to\infty$ in such a way that $p/n\to\gamma\in(0,1)$, a central and a stable limit theorem for the logarithmic volume of  random simplices and random convex bodies is shown. The result follows from a related central limit theorem for the log-determinant of $p\times n$ random matrices whose rows are copies of a random vector with an elliptical distribution, which is established as well.
\end{abstract}
\keywords{Central limit theorem, elliptical distribution, logarithmic volume, random determinant, random simplex, stable limit theorem, stochastic geometry in high dimensions}
\subjclass{Primary 52A22, 52A23, 60B20; Secondary 60D05, 60F05}

\maketitle

\section{Introduction}\setcounter{equation}{0}

The probabilistic analysis of convex hulls in $\R^n$ generated by $p$ random points is one of the central themes of geometric probability and stochastic geometry. A variety of different models and results are known in the literature, mainly in the asymptotic regime $p\to\infty$, while the dimension parameter $n$ is kept fixed. Examples include the expectation asymptotics for the number of faces or the intrinsic volumes and their tight relation to affine surface areas, related upper and lower variance bounds as well as results on the asymptotic normality or concentration properties for these combinatorial and geometric parameters. We refer the reader to the survey articles of B\'ar\'any \cite{Bar07}, Hug \cite{Hug13} and Reitzner \cite{ReitznerSurvey} for motivation, background material and references. Recent interest in random polytopes has triggered an analysis in which the number $p$ of generating points and the space dimension $n$ tend to infinity in a suitably coupled way. In this context, the case where $p\leq n$ and $n\to\infty$ is of particular interest and has been considered in \cite{ABGK21,EK17,GKT17,Paindaveine}, building on earlier works \cite{Mae80,Math82,Rub77} in which $p$ stays constant. In this situation the random polytopes are just simplices of dimension $p$. In particular, these papers prove asymptotic normality of the logarithmic volume of the random simplices in high dimensions under special distributional assumptions on the generating random points. More precisely, the papers \cite{EK17,GKT17} deal with the case in which the random points are distributed according to a beta or beta-prime distribution in $\R^n$, which includes the uniform distribution in the $n$-dimensional unit ball and, as a limiting case, the uniform distribution on the $(n-1)$-dimensional unit sphere or the standard Gaussian distribution. The article \cite{Paindaveine} treats the Gaussian case together with applications to multivariate medians in statistics. In \cite{ABGK21} the distribution of the points arises from general product measures or a class of $q$-radial distributions on the $\ell_q$-ball in $\R^n$. In the latter setting, the particularly interesting case of high-dimensional pinned random simplices has been considered, where one of the generating points is fixed (pinned) at the origin. In this case, $p!$ times the volume of the random simplex is the same as the volume of the parallelotope spanned by the random points. 

The aim of the present paper is to provide central limit theorems for the logarithmic volume of $p$-dimensional pinned random simplices whose generating points follow a general elliptical distribution in $\R^n$. In particular, we will be interested in the high-dimensional regime, where $p=p_n$ is a function of the space dimension $n$ satisfying $p\to\infty$, as $n\to\infty$, and is such that $p/n\to\gamma\in(0,1)$. For convenience, let us recall that an $n$-dimensional random vector $\bfx$ follows an elliptical distribution if it takes the form
$$
\bfx=R\A\bfu,
$$
where $R\geq 0$ is an arbitrary real-valued random variable, $\A$ is a fixed $n\times n$ matrix of full rank and $\bfu$ is a uniform random direction, that is, a uniform random point on the unit sphere in $\R^n$, which is independent from $R$. Suppose now that we have $p$ independent copies $\bfx_1,\ldots,\bfx_p$  of the vector $\bfx$ that form the matrix  $\X:=(\bfx_1,\ldots,\bfx_p)^{\top}\in\R^{p\times n}$. 
The (pinned) random simplex with vertex set $\{\mathbf{0},\bfx_1,\ldots,\bfx_p \}$ can now be defined as
\begin{align}\label{def:simplex}
\Delta \X := \bigg\{ \sum_{i=1}^ps_i \bfx_i \,:\, s_i \geq 0 \quad\text{and}\quad \sum_{i = 1}^p s_i \leq 1 \bigg\},
\end{align}
and its $p$-volume admits the following representation in terms of the matrix~$\X$, see \cite[Section 8.7]{shilov:2012}:
\begin{align} \label{eq:simplexVol}
\Vol_p \left( \Delta\X  \right) 
= \frac{1}{p!} \sqrt{\det(\X\X^{\top})} \,,
\end{align}
where $\det(\bfM)$ denotes the determinant of a square matrix $\bfM$ and $\bfM^{\top}$ its transpose. Note that since $p\le n$  the vectors $\bfx_1,\ldots,\bfx_p$ are almost surely linearly independent and $\Delta \X$ is a $p$-dimensional random convex polytope in  $\R^n$ with non-zero $p$-volume. The representation \eqref{eq:simplexVol} immediately motivates a probabilistic analysis of random determinants of the form $\det(\X\X^\top)$, which are in the focus of the present paper as well. We remark that the study of random determinants has a long history starting with the works in \cite{GirkoCLT79,GirkoBook}, which have later been extended by many authors, see \cite{bao:lin:pan:zhou:2015,NguyenVuDet,TaoVuCLTWigner,wang:han:pan:2018, dette:doernemann:2020, heiny:johnston:prochno:2022, doernemann:2023, heiny:parolya:2023}, for example, as well as the references cited therein. Our main results can be summarized as follows:
\begin{itemize}
\item[(i)] Assuming upper and lower bounds on the eigenvalues of the matrix $\A\A^\top$ we derive in Theorem \ref{thm:main} asymptotic normality in high dimensions, that is, as $p\to\infty$ and $n\to\infty$ such that $p/n\to\gamma\in(0,1)$, for the logarithmic $p$-volume of the pinned random simplices $\Delta\X$ generated by random vectors following an elliptical distribution with $R=1$.  An example of such matrix $\A$  is a diagonal matrix $\diag\{a_{1},\ldots, a_n\}$, where the numbers $(a_i)_{i\ge 1}$ form a sequence with $|a_i|\in [C^{-1},C]$ for some $1\leq C< \infty$. In particular with $C=1$ we have that $\A$ is an identity matrix.  We also extend Theorem \ref{thm:main} by allowing arbitrary distributions for the radius $R\ge 0$ which might yield non-normal limiting laws for the logarithmic volume if the distribution of $\log R$ has infinite variance. 

\item[(ii)]
Then we take a more general point of view by considering for a fixed convex body $\Sigma\subset\R^p$ the random convex set
$$
\Upsilon_{p,n}(\Sigma,\X) := \left\{\sum_{i=1}^ps_i \bfx_i:(s_1,\ldots,s_p)\in\Sigma \right\},
$$
where $\bfx_1,\ldots,\bfx_p$ are the column vectors of $\X^\top$. In the special case $p=n$ this random set model has first been considered in \cite{PP13} in the context of small-ball probabilities and later in \cite{ABGK21} under the angle of high-dimensional central limit theorems. Note that by choosing for $\Sigma$ the $p$-dimensional standard simplex, $\Upsilon_{p,n}(\Sigma,\X)$ reduces to the pinned random simplex $\Delta\X$ discussed above. We establish in Theorem \ref{thm:RandomConvexBody} a central limit theorem for the logarithmic $p$-volume of $\Upsilon_{p,n}(\Sigma,\X)$, as $p\to\infty$ and $n\to\infty$ such that $p/n\to\gamma\in(0,1)$, for matrices $\X$ generated by random vectors satisfying the conditions of Theorem \ref{thm:main}.
\end{itemize}

The remaining parts of this text are structured as follows. In Section~\ref{sec:Results}, we formally introduce our set-up together with the necessary notation. Our main results, Theorem~\ref{thm:main} for the log-determinant and its geometric counterpart Theorem \ref{thm:RandomConvexBody}, are the contents of Section~\ref{subsec:LogDet} and Section~\ref{subsec:RCB}, respectively. Section \ref{sec:proofmain} is devoted to the proof of Theorem \ref{thm:main}, while the Appendix contains some auxiliary lemmas.

\section{Limit theorems for the logarithmic volume of elliptical simplices}\label{sec:Results}\setcounter{equation}{0}

We consider independent and identically distributed (i.i.d.)~$n$-dimensional random vectors $\bfx_1,\ldots,\bfx_p$  following an elliptical distribution, that is,
\begin{equation}\label{eq:dataell}
\bfx_i=R_i\A\bfu_i\,, \qquad i=1,\ldots,p\,,
\end{equation}
where $\A\in \R^{n\times n}$ is a deterministic matrix of full rank, 
$R_i\ge 0$ is a scalar random variable representing the radius of $\bfx_i$, and $\bfu_i$ is the random direction, which is uniformly distributed on the unit sphere  $\mathbb{S}^{n-1}$. Moreover, we suppose that the random variables $R_i, \bfu_i, i=1,\ldots,p$ are independent. 
The elliptically distributed random vectors are collected in the $p\times n$ matrix 
\begin{equation}\label{eq:datam}
\X:=\bfx_n:=(\bfx_1,\ldots,\bfx_p)^{\top}\,.
\end{equation}
{\em Throughout this paper, $\X, \bfx_i, R_i, \bfu_i, \A$ depend on the dimension $n$, i.e.\ $\X=\bfx_n, \bfx_i=\bfx_{i,n}, R_i=R_{i,n}, \bfu_i=\bfu_{i,n}, \A=\A_n$. For simplicity we will typically suppress the dependence on $n$ in the notation.}

Let us consider the random simplex $\Delta\X$, defined in \eqref{def:simplex}, induced by the matrix $\X$. From \eqref{eq:simplexVol} we get the identity
\begin{equation}\label{eq:081220A}
\log \Vol_p \left( \Delta\X  \right)=-\log(p!) +\frac{1}{2} \log \det(\X\X^{\top})\,.
\end{equation}
Thus, in order to establish a central limit theorem for $\log \Vol_p \left( \Delta\X  \right)$ it is enough to establish such a result for $\log \det(\X\X^{\top})$. This problem is treated in Section \ref{subsec:LogDet}, whereas its geometric implications will be discussed in Section \ref{subsec:RCB}.

\subsection{Central limit theorem for the log-determinant}\label{subsec:LogDet}

In this section we deal with a central limit theorem for the log-determinant of $\X\X^\top$. Introducing the diagonal matrix $\bfR$ with diagonal entries $R_1,\ldots, R_p$, and the matrix
$$
\bfY:=\bfY_n:=(\A \bfu_1,\ldots,\A \bfu_p)^{\top},
$$
we have
\begin{equation*}
\det(\X\X^{\top})= \det(\bfR \bfY\bfY^{\top} \bfR)
=\det(\bfY\bfY^{\top}) (\det\bfR)^2\,
\end{equation*}
and taking the logarithm yields
\begin{equation}\label{eq:081220B}
\begin{split}
\log \det(\X\X^{\top})&= 2 \log \det \bfR + \log \det(\bfY\bfY^{\top})\\
&= 2 \sum_{i=1}^p \log R_i + \log \det(\bfY\bfY^{\top})\,,
\end{split}
\end{equation}
where the the last two terms are independent.

The following CLT for $\log \det(\bfY\bfY^{\top})$ is crucial for us. We start by spelling out our assumptions under which we are able to derive the central limit theorem:
\begin{itemize}
\item[(A)] For the parameter $p=p_n$ there exists a constant $\gamma\in(0,1)$ such that $p/n\to \gamma$ as $\nto$. 
\item[(B)] There exists a constant $C\ge 1$ not depending on $n$ such that the ordered eigenvalues 
\begin{equation}\label{eq:B1}
\lambda_{max}(\A\A^{\top})=\lambda_1(\A\A^{\top})\ge \lambda_2(\A\A^{\top})\ge\cdots \ge \lambda_n(\A\A^{\top})=\lambda_{min}(\A\A^{\top})
\end{equation}
of $\A\A^{\top} =\A_n \A_n^{\top}$ satisfy
$C^{-1}\le \lambda_{min}(\A\A^{\top})\le \lambda_{max}(\A\A^{\top})\le C$. In addition, we assume that 
\begin{equation}\label{eq:B2}
\lim_{\nto} \tr\Big( \A_n\A_n^{\top} - \frac{\tr(\A_n\A_n^{\top})}{n} \bfI_n\Big)^2= \lim_{\nto} \sum_{k=1}^n\Big( \lambda_{k}(\A_n\A_n^{\top})- \frac{1}{n} \sum_{j=1}^n\lambda_{j}(\A_n\A_n^{\top})\Big)^2 =0\,,
\end{equation}
where $\bfI_n$ is the $n\times n$ identity matrix.
\end{itemize}

Next, we introduce some quantities that will play an important role in our results. For $1\le i\le p-1$ and $1\le k\le n$, we set
\begin{equation}\label{def:tik}
t_{i,k}(\A)= \E\left[\frac{1}{1+ \lambda_k(\A\A^{\top}) \bfw_{ik}^{\top} \big(\sum_{\ell=1; \ell \neq k}^n \lambda_{\ell}(\A\A^{\top}) \bfw_{i\ell} \bfw_{i\ell}^{\top}\big)^{-1}\bfw_{ik}}\right]\,,
\end{equation}
where $\bfw_{i1}, \ldots, \bfw_{in}$ are i.i.d.\ $i$-dimensional random column vectors whose components are independent standard normal random variables. We shall elaborate on these conditions and quantities in Remark \ref{rem:3.2} below.
Recalling that $\bfY=\bfY_n$ and $\A=\A_n$, we are now prepared to present our central limit theorem for the log-determinant of $\bfY\bfY^{\top}$, whose proof is presented in Section \ref{sec:proofmain}. 

\begin{theorem}\label{thm:main}
Let $(\bfY)_{n \ge 1}$ be a sequence of random $p\times n$ matrices defined as follows: $\bfY=(\A \bfu_1,\ldots,\A \bfu_p)^{\top}$, where $(\A)_{n\ge 1}$ is a sequence of deterministic $n\times n$ matrices satisfying assumption (B), and $\bfu_1,\ldots, \bfu_p$ are independent $n$-dimensional random vectors, distributed uniformly on the unit sphere $\mathbb{S}^{n-1}$.
Under assumption (A), as $\nto$, it holds that
\begin{equation}\label{eq:main}
\frac{\log \det(\bfY\bfY^{\top}) -\mu_n}{\sigma_n} \cid N(0,1)\,,
\end{equation}
where the centering and normalizing sequences $(\mu_n)_{n\geq 1}$ and $(\sigma_n)_{n\geq 1}$ are given by
\begin{align}\label{eq:meanvar}
\begin{split}
\mu_n&= \log \tr(\A\A^{\top}) -p \log n -\frac{\sigma_n^2}{2}+ \sum_{i=1}^{p-1} \log \bigg(\sum_{k=1}^n \lambda_k(\A\A^{\top}) t_{i,k}(\A)\bigg)\,,\\
\sigma_n^2&=-2\frac{p}{n} + 2 \sum_{i=1}^{p-1} \frac{\sum_{k=1}^n \lambda_k^2(\A\A^{\top}) t_{i,k}(\A)}{(\sum_{k=1}^n \lambda_k (\A\A^{\top}) t_{i,k}(\A))^2}\,.
\end{split}
\end{align}
\end{theorem}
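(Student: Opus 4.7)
My plan is to combine a Gaussianization with a row-wise martingale decomposition of the log-determinant. By orthogonal invariance of the uniform law on $\mathbb{S}^{n-1}$, I may assume without loss of generality that $\A=\bfLambda^{1/2}=\diag(\sqrt{\lambda_1},\ldots,\sqrt{\lambda_n})$, since replacing $\A$ by $\bfO\A$ for any orthogonal $\bfO$ leaves $\bfY\bfY^{\top}$ invariant in distribution. Writing $\bfu_i=\mathbf{g}_i/\|\mathbf{g}_i\|$ for i.i.d.\ standard Gaussians $\mathbf{g}_1,\ldots,\mathbf{g}_p\in\R^n$ (polar decomposition) yields
\begin{equation*}
\log\det(\bfY\bfY^{\top}) \;=\; \log\det(\mathbf{G}\bfLambda\mathbf{G}^{\top}) - \sum_{i=1}^p\log\|\mathbf{g}_i\|^2,
\end{equation*}
where $\mathbf{G}$ has rows $\mathbf{g}_i^{\top}$. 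The key observation is that $\bfu_i$ is independent of $\|\mathbf{g}_i\|$, so $\log\det(\bfY\bfY^{\top})$ is independent of $\sum_i\log\|\mathbf{g}_i\|^2$. A classical CLT handles the latter i.i.d.\ sum of $\log\chi_n^2$ variables (mean $p\log n+O(p/n)$, variance $p\,\psi'(n/2)\sim 2p/n$), and the independence just noted is precisely what forces its variance to enter $\sigma_n^2$ with a \emph{minus} sign, accounting for both the $-p\log n$ in $\mu_n$ and the $-2p/n$ in $\sigma_n^2$.

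For $\log\det(\mathbf{G}\bfLambda\mathbf{G}^{\top})$ I apply the base-times-height / Schur-complement identity
\begin{equation*}
\log\det(\mathbf{G}\bfLambda\mathbf{G}^{\top})=\sum_{i=1}^p\log d_i,\qquad d_i:=\mathbf{g}_i^{\top}\bfLambda^{1/2}(\bfI_n-\bfP_{i-1})\bfLambda^{1/2}\mathbf{g}_i,
\end{equation*}
where $\bfP_{i-1}$ is the orthogonal projection onto the column span of $\bfLambda^{1/2}\mathbf{G}_{i-1}^{\top}$ (with $\bfP_0=0$). With the filtration $\mathcal{F}_i=\sigma(\mathbf{g}_1,\ldots,\mathbf{g}_i)$, the matrix $\bfM_i:=\bfLambda^{1/2}(\bfI_n-\bfP_{i-1})\bfLambda^{1/2}$ is $\mathcal{F}_{i-1}$-measurable, so standard Gaussian quadratic-form identities yield $\E[d_i\mid\mathcal{F}_{i-1}]=\tr(\bfM_i)$ and $\Var(d_i\mid\mathcal{F}_{i-1})=2\tr(\bfM_i^2)$. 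Expressing $\bfP_{i-1}$ through the columns $\mathbf{g}_{i-1}^{(k)}$ of $\mathbf{G}_{i-1}$ and applying Sherman-Morrison gives
\begin{equation*}
\tr(\bfM_i)=\sum_{k=1}^n\frac{\lambda_k}{1+\lambda_k(\mathbf{g}_{i-1}^{(k)})^{\top}\bigl(\sum_{\ell\neq k}\lambda_\ell\,\mathbf{g}_{i-1}^{(\ell)}(\mathbf{g}_{i-1}^{(\ell)})^{\top}\bigr)^{-1}\mathbf{g}_{i-1}^{(k)}},
\end{equation*}
whose unconditional expectation equals exactly $\sum_k\lambda_k t_{i-1,k}(\A)$, identifying the quantities appearing in the statement.

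Setting $\xi_i:=\log d_i-\E[\log d_i\mid\mathcal{F}_{i-1}]$, the sequence $(\xi_i)_{1\le i\le p}$ is a martingale-difference array with respect to $(\mathcal{F}_i)$. A second-order Taylor expansion of $\log$ around $\tr(\bfM_i)$, legitimate because Hanson-Wright concentration combined with the bound $\tr(\bfM_i^2)/\tr(\bfM_i)^2=O(1/n)$ guaranteed by (B) give $d_i/\tr(\bfM_i)=1+O_{\mathbb{P}}(n^{-1/2})$, produces
\begin{equation*}
\xi_i = \frac{d_i-\tr(\bfM_i)}{\tr(\bfM_i)} - \frac{(d_i-\tr(\bfM_i))^2-2\tr(\bfM_i^2)}{2\tr(\bfM_i)^2}+o_{\mathbb{P}}(n^{-1}),
\end{equation*}
and the Jensen-gap $\E[\log d_i\mid\mathcal{F}_{i-1}]-\log\tr(\bfM_i)=-\tr(\bfM_i^2)/\tr(\bfM_i)^2+o(n^{-1})$, coupled with concentration of $\tr(\bfM_i)$ at $\sum_k\lambda_k t_{i-1,k}(\A)$, produces the $-\sigma_n^2/2$ shift in $\mu_n$. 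The proof is then completed by the Hall-Heyde martingale CLT, which requires (i) convergence in probability of $\sum_i\E[\xi_i^2\mid\mathcal{F}_{i-1}]$ to the announced limit, and (ii) the conditional Lindeberg condition (routine via the sub-exponential tails of Gaussian quadratic forms). A small additional subtlety is that $\sum_i\xi_i$ equals $\log\det(\mathbf{G}\bfLambda\mathbf{G}^{\top})$ minus the \emph{random} centering $\sum_i\E[\log d_i\mid\mathcal{F}_{i-1}]$, so one must separately argue that this centering concentrates on its deterministic limit at rate $o_{\mathbb{P}}(\sigma_n)$, which can again be done by a secondary martingale/Sherman-Morrison concentration.

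The hard part is (i): one must show that the random ratios $\tr(\bfM_i^2)/\tr(\bfM_i)^2$ concentrate around the deterministic expression $\sum_k\lambda_k^2 t_{i-1,k}(\A)/\bigl(\sum_k\lambda_k t_{i-1,k}(\A)\bigr)^2$. To this end I would decompose $\tr(\bfM_i^2)=\sum_{k,\ell}\lambda_k\lambda_\ell(\bfI_n-\bfP_{i-1})_{k\ell}^2$ and, column-by-column via Sherman-Morrison, argue that (a) each diagonal entry $1-(\bfP_{i-1})_{kk}$ concentrates in $L^2$ at $t_{i-1,k}(\A)$ and (b) the off-diagonal entries $(\bfP_{i-1})_{k\ell}$, $k\neq\ell$, are mean-zero with total quadratic contribution of lower order. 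The flatness hypothesis \eqref{eq:B2} is essential here: it forces the empirical spectrum of $\A\A^{\top}$ to collapse onto its mean and thereby controls the cross-column correlations that would otherwise preclude such concentration, while the uniform spectral bounds in (B) upgrade pointwise concentration to $L^2$ convergence of the sum of conditional variances. Combined with the classical CLT for the independent piece $\sum_i\log\|\mathbf{g}_i\|^2$, this delivers the stated central limit theorem with parameters $(\mu_n,\sigma_n^2)$.
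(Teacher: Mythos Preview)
Your architecture is essentially the paper's: reduce to diagonal $\A$ by rotational invariance, apply the base-times-height (perpendicular) decomposition, Taylor-expand $\log$ to second order, invoke the Hall--Heyde martingale CLT, identify the $t_{i,k}(\A)$ via Sherman--Morrison, and concentrate the sum of conditional variances using assumption~\eqref{eq:B2}. The one genuinely different idea is the Gaussianization: you factor $\log\det(\bfY\bfY^{\top})=\log\det(\mathbf{G}\bfLambda\mathbf{G}^{\top})-\sum_i\log\|\mathbf{g}_i\|^{2}$, use that $\log\det(\bfY\bfY^{\top})$ is independent of $\sum_i\log\|\mathbf{g}_i\|^{2}$, and recover the target CLT by a characteristic-function division. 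The paper instead runs the martingale CLT directly on the spherical quadratic forms $\wt Z_{i+1}=n\,\bfu_{i+1}^{\top}\bfQ_i\bfu_{i+1}-1$ and extracts the $-2p/n$ from the exact spherical moment $n^{2}\E[U_{11}^{4}]=3n/(n+2)$ (Lemma~\ref{lem:secondmoment} and Lemma~\ref{lem:sigma}). Your route makes the conditional moments cleaner and the origin of $-2p/n$ transparent; the paper's route avoids the extra deconvolution step.

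There is, however, a concrete gap in your step~(b). Writing $Q:=\bfI_n-\bfP_{i-1}$ (a projection), the off-diagonal contribution $\sum_{k\neq\ell}\lambda_k\lambda_\ell Q_{k\ell}^{2}$ is \emph{not} of lower order than the diagonal one: since $\sum_{\ell} Q_{k\ell}^{2}=Q_{kk}$, already when $\lambda_k\equiv 1$ the off-diagonal part equals $\sum_k Q_{kk}(1-Q_{kk})\asymp i(n-i)/n$, comparable to the diagonal part $\sum_k Q_{kk}^{2}$. If you simply discard the off-diagonals, step~(a) alone would center $\tr(\bfM_i^{2})$ at $\sum_k\lambda_k^{2}\,t_{i-1,k}^{2}$ rather than the correct $\sum_k\lambda_k^{2}\,t_{i-1,k}$. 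The repair is to exploit $Q^{2}=Q$ \emph{before} splitting: from $\sum_{\ell} Q_{k\ell}^{2}=Q_{kk}$ one obtains
\[
\tr(\bfM_i^{2})=\sum_{k}\lambda_k\Bigl(\bar\lambda\,Q_{kk}+\sum_{\ell}(\lambda_\ell-\bar\lambda)Q_{k\ell}^{2}\Bigr),\qquad \bar\lambda:=\tfrac{1}{n}\tr(\bfA^{2}),
\]
and \eqref{eq:B2} together with Cauchy--Schwarz and $|Q_{k\ell}|\le 1$ makes the inner correction negligible. This reduces $\tr(\bfM_i^{2})$ to a \emph{linear} (not quadratic) combination of the diagonals $Q_{kk}$, precisely the objects your step~(a) and the paper's Lemma~\ref{lem:yaskov} concentrate.
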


\begin{remark}\label{rem:3.2}\rm 
\begin{enumerate}
\item[(1)] The centering and normalizing sequences only depend on $\A$ through the eigenvalues $\lambda_1(\A\A^{\top}), \ldots, \lambda_n(\A\A^{\top})$. This is due to the rotational invariance of the vectors $\bfu_i$ that was used in \eqref{eq:repU} of the proof. 
\item[(2)] Assumption (A) and \eqref{eq:B1} in Assumption (B) guarantee that $\sigma_n^2$ is of constant order, see Lemma~\ref{lem:sigma}, which is convenient in view of the existing results on log-determinants of random matrices \cite{BaoPanZhou2015,GirkoCLT79,GirkoBook,NguyenVuDet,TaoVuCLTWigner}. Assumption (A) excludes $\gamma\in \{0,1\}$. If $p/n \to 1$, an adjustment is required for the variance $\sigma_n^2$. For example, in the case $p/n \to 1$ and $\bfA=\bfI_n$, $\sigma_n^2$ can be chosen as $-2\log(1-(p-1)/n)- 2 p/n$ which tends to infinity \cite{wang:han:pan:2018}. Moreover, in the case $p/n\to 0$ the limiting distribution for the log-determinant might not be Gaussian. For example, if $p$ stays constant, one obtains a Gamma distribution \cite{heiny:johnston:prochno:2022}.
\item[(3)] The quantities $t_{i,1}(\A), \ldots, t_{i,n}(\A)$ satisfy $t_{i,1}(\A)+ \cdots +t_{i,n}(\A)=n-i$ for any full rank matrix $\A\in \R^{n\times n}$ and $1\le i\le p-1$. For details we refer to Remark~\ref{rem:tik}.
\item[(4)] If $\bfA=\bfI_n$ (the $n$-dimensional identity matrix), one sees from \eqref{def:tik} that $t_{i,k}(\bfI_n)=t_{i,\ell}(\bfI_n)$ for any $k\neq \ell$, which in conjunction with the previous point implies $t_{i,k}(\bfI_n)=(n-i)/n$. Plugging this into \eqref{eq:meanvar}, we recover \cite[Theorem~1]{jiang:yang:2013} and a particular case of \cite[Theorem~2.1]{parolya:heiny:kurowicka:2021} as a special case of our Theorem~\ref{thm:main}.
\item[(5)] Instead of \eqref{eq:B1} in Assumption (B), it is possible to require 
$$\frac{\lambda_{max}(\A\A^{\top})}{\lambda_{min}(\A\A^{\top})} \le C^2$$
for some constant $C\ge 1$ not depending on $n$, which would, for example, allow that the largest and smallest eigenvalues of $\A\A^{\top}$ tend to zero or infinity at the same speed. Up to a rescaling of $\A$, this assumption is, however, equivalent to \eqref{eq:B1}. 
\item[(6)] Equation \eqref{eq:B2} in Assumption (B) is a technical condition that can likely be relaxed or removed.  It means that $\A\A^{\top}$ is close in Frobenius norm to a multiple of the identity matrix. The second limit in \eqref{eq:B2} shows that \eqref{eq:B2} is a condition on the eigenvalues of $\A\A^{\top}$. 
Assumption \eqref{eq:B1} is the main weakness of Theorem \ref{thm:main} as it is rather restrictive. On the positive side, \eqref{eq:B2} is only required in the proof of Lemma~\ref{lem:yaskov} to control the variance of a linear combination of diagonal entries of a large projection matrix. If the right-hand side of \eqref{eq:nastypart} can be improved to $o(1/n)$ in the proof of Lemma~\ref{lem:yaskov}, then assumption \eqref{eq:B2} can be dropped.
\end{enumerate}
\end{remark}

 In the proof of Theorem~\ref{thm:main} we first show that due to ellipticity it suffices to restrict ourselves to diagonal matrices $\bfA$ when studying $\log \det(\bfY\bfY^{\top})$. Using the elementary fact that the magnitude of the determinant of a Gram matrix built of real vectors is equal to the volume of the parallelepiped spanned by those vectors, we obtain a representation for the log-determinant of $\bfY \bfY^{\top}$ as a product of perpendiculars, see \cite[p.~1602 ff.]{bao:pan:zhou:2015} for details. The same trick has also been employed in \cite{bao:pan:zhou:2015, heiny:parolya:2023, wang:han:pan:2018} to obtain central limit theorems for large sample covariance and correlation matrices. This representation for $\log \det(\bfY\bfY^{\top})$ is then studied through martingale differences which are based on quadratic forms involving large projection matrices. The crucial difference to the aforementioned references is that the diagonal elements of our projection matrices are dependent  and not identically distributed (unless $\bfA=\bfI_n$), which significantly complicates the analysis and the derivation of the centering and normalizing sequences $(\mu_n)_{n\geq 1}$ and $(\sigma_n)_{n\geq 1}$. The detailed technical proof of Theorem~\ref{thm:main} is given in Section~\ref{sec:proofmain}.

\subsection{Central limit theorem for the volume of random simplices and convex bodies}\label{subsec:RCB}

After having discussed the central limit theorem for random determinants in the previous section, we return to our geometric application to random simplices and convex bodies. We start by recalling from \eqref{eq:081220A} and \eqref{eq:081220B} that 
$$
\log\Vol_p(\Delta\X) = -\log(p!) + \sum_{i=1}^p\log R_i + \frac{1}{2}\log\det(\bfY\bfY^\top),
$$
where we use the same notation as in the previous section. Especially we recall that $p=p_n$ and that $\X$ is a matrix generated by $p$ independent random elliptically distributed random vectors with radial parts $R_1,\ldots,R_p$. Since the random variables $R_1,\ldots,R_p$ are independent and identically distributed for every $n$, the generalized central limit theorem for sums of i.i.d.\ random variables (see, e.g., \cite[Theorem~IV.4.18]{petrov:1975} and \cite[Theorem~B.2]{heiny:johnston:prochno:2022})  implies that there are sequences $(m_n)_{n\geq 1}$ and $(s_n)_{n\geq 1}$ such that
\begin{equation}\label{eq:CLTRi}
\frac{\sum_{i=1}^p\log R_i- m_n}{s_n} \overset{d}{\longrightarrow} S_\alpha,\qquad n\to\infty\,,
\end{equation}
where $S_\alpha$ stands for the $\alpha$-stable distribution with stability index $\alpha\in(0,2]$. For example, if $\alpha=1$, $S_\alpha$ is the Cauchy distribution or if $\alpha=2$ then $S_\alpha$ corresponds to a centered Gaussian distribution. Sufficient conditions on $\log R_1$ such that \eqref{eq:CLTRi} holds are provided in \cite[Theorem~IV.4.18]{petrov:1975} for the case $\alpha=2$ and in \cite[Theorem~B.2]{heiny:johnston:prochno:2022} for the case $\alpha\in (0,2)$. Writing now
\begin{align*}
V_n:&={\log\Vol_p(\Delta\X)-{1\over 2}\mu_n-m_n+ \log(p!) \over\max\{\sigma_n/2,s_n\}}\\ 
&={\sum_{i=1}^p\log R_i- m_n\over\max\{\sigma_n/2,s_n\}} + {{1\over 2}\log\det(\bfY\bfY^\top)-{1\over 2}\mu_n\over\max\{\sigma_n/2,s_n\}}
\end{align*}
and noting that both summands on the right-hand side are independent, we conclude the following corollary from Theorem \ref{thm:main}, part (2) of Remark~\ref{rem:3.2} and \eqref{eq:CLTRi}. We note that this result generalizes Theorems E-F in \cite{heiny:johnston:prochno:2022} for $p/n\to \gamma$, where only the case $\A=\bfI_n$ was analyzed.

\begin{corollary}\label{cor:SimplexWithRadialPart}
Consider the random simplex $\Delta\X$ defined in \eqref{def:simplex}, where $\X=(\bfx_1,\ldots,\bfx_p)^{\top}$ is as in \eqref{eq:datam} with i.i.d.\ random vectors $\bfx_i=R_i \A \bfu_i$ following an $n$-dimensional elliptical distribution as in \eqref{eq:dataell}. Under assumptions (A) and (B), and assuming that $\log R_1$ satisfies \eqref{eq:CLTRi} for some $\alpha \in (0,2]$, the following statements hold. 
\begin{enumerate}
\item[(i)] If $s_n\to 0$, then $V_n\cid N(0,1)$, as $n\to\infty$.
\item[(ii)] If $s_n\to \infty$, then $V_n \cid S_\alpha$, as $n\to\infty$.
\item[(iii)] If $s_n/(\sigma_n/2)\to \tau\in(0,\infty)$, then $V_n \cid \max\big\{1,{1\over\tau}\big\}S_\alpha+\min\{1,\tau\}N(0,1)$, as $n\to\infty$.
\end{enumerate}
\end{corollary}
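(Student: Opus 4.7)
The plan is to combine the two limit theorems already at our disposal --- Theorem~\ref{thm:main} for the log-determinant of $\bfY\bfY^\top$ and the generalised CLT~\eqref{eq:CLTRi} for $\sum_{i=1}^p \log R_i$ --- via Slutsky's theorem and the independence of the radii $R_i$ from the directions $\bfu_i$. Setting $D_n := \max\{\sigma_n/2, s_n\}$, the decomposition displayed just before the statement rewrites as
\begin{equation*}
V_n \;=\; a_n X_n + b_n Y_n,
\qquad a_n := \frac{s_n}{D_n}, \quad b_n := \frac{\sigma_n/2}{D_n},
\end{equation*}
where $X_n := (\sum_{i=1}^p \log R_i - m_n)/s_n \cid S_\alpha$ by hypothesis, $Y_n := (\log\det(\bfY\bfY^\top) - \mu_n)/\sigma_n \cid N(0,1)$ by Theorem~\ref{thm:main}, and $X_n$ is independent of $Y_n$ for every $n$. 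Because of this independence, the joint law of $(X_n, Y_n)$ is a product measure, so the two marginal convergences automatically lift to joint convergence $(X_n, Y_n) \cid (S, N)$ with $S \eid S_\alpha$ and $N \sim N(0,1)$ independent.

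The second step is to track the deterministic prefactors $a_n, b_n \in [0,1]$. Remark~\ref{rem:3.2}(2) together with Lemma~\ref{lem:sigma} guarantee that $(\sigma_n)$ is bounded above and below by positive constants, which is what makes regimes (i) and (ii) meaningful. In (i) the hypothesis $s_n \to 0$ forces $D_n = \sigma_n/2$ eventually, so $a_n \to 0$ and $b_n = 1$; Slutsky's theorem then yields $a_n X_n \cip 0$ and $b_n Y_n \cid N(0,1)$, hence $V_n \cid N(0,1)$. Regime (ii) is symmetric: $s_n \to \infty$ gives $D_n = s_n$ eventually, $a_n = 1$, $b_n \to 0$, and hence $V_n \cid S_\alpha$. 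In (iii), the elementary identities
\begin{equation*}
a_n = \min\!\Big\{\frac{s_n}{\sigma_n/2},\, 1\Big\}, \qquad b_n = \min\!\Big\{1,\, \frac{\sigma_n/2}{s_n}\Big\},
\end{equation*}
combined with $s_n/(\sigma_n/2) \to \tau$, yield deterministic limits of $a_n$ and $b_n$ depending only on $\tau$; the continuous mapping theorem applied to the joint convergence of $(X_n, Y_n, a_n, b_n)$ then produces the claimed limit law of the form $a_\tau S + b_\tau N$.

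I do not anticipate a genuine obstacle; the entire argument is a bookkeeping exercise wiring together the independence of $(R_i)$ and $(\bfu_i)$, Slutsky's theorem, and the two input CLTs. The one mildly delicate point is that the deterministic sequence $(\sigma_n)$ need not converge --- it is only bounded between two positive constants --- so one cannot pass to limits of $s_n$ and $\sigma_n$ separately. This is precisely why case (iii) is formulated through the single ratio $s_n/(\sigma_n/2)$, and why $\max\{\sigma_n/2, s_n\}$ in the denominator of $V_n$ is the natural normalisation: it forces one of the two prefactors to equal~$1$ in the boundary regimes and yields deterministic limits of both prefactors in the critical regime without requiring $\sigma_n$ itself to converge.
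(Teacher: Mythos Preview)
Your proposal is correct and follows essentially the same approach as the paper: the paper's proof is the short paragraph immediately preceding the corollary, which decomposes $V_n$ into the two independent summands, invokes Theorem~\ref{thm:main}, Remark~\ref{rem:3.2}(2), and \eqref{eq:CLTRi}, and then reads off the three regimes. Your write-up simply makes explicit the Slutsky/continuous-mapping bookkeeping that the paper leaves implicit.
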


In a next step, we turn to a generalization of this result.  We begin by noting that for any $p\times p$ matrix $\bfM$, one has
\begin{equation*}
\det(\bfM \bfY\bfY^{\top} \bfM^{\top})= (\det(\bfM))^2 \det(\bfY\bfY^{\top})\,.
\end{equation*}
Assuming that $\bfM$ has full rank, this yields the following representation of the log-volume:
\begin{align} 
\log \Vol_p \left( \Delta (\bfM\bfY)  \right) 
&= -\log(p!)+ \frac{1}{2} \log \det(\bfM \bfY\bfY^{\top}\bfM^{\top}) \nonumber\\
&=-\log(p!)+ \frac{1}{2} \log \det(\bfY\bfY^{\top}) +\log |\det \bfM|\,. \label{eq:simplexnew}
\end{align}
Thus the fluctuations of $\log \Vol_p \left( \Delta (\bfM\bfY)  \right)$ can be directly derived from our theorems above.

Theorem \ref{thm:main} has an immediate geometrical interpretation, which we will present next. For a compact convex subset $\Sigma \subset \R^p$ with non-empty interior and random $n$-dimensional vectors $\bfy_1,\ldots, \bfy_p$ we define the random convex body 
\begin{equation}\label{}
\Upsilon_{p,n}(\Sigma,\bfY) :=\bigg\{ \sum_{i=1}^p s_i \bfy_i \,:\, (s_1,\dots,s_p)\in \Sigma \bigg\}\,,
\end{equation}
where $p \le n$ and the $n\times p$ matrix $\bfY^{\top}=(\bfy_1,\ldots,\bfy_p)$ is treated as a linear operator, applied to the body $\Sigma$. It should be noted here, that for any $n,p\in\mathbb{N}$, $p\le n$ and for any convex body $\Sigma$, $\Upsilon_{p,n}(\Sigma,\bfY)$ is a random $p$-dimensional closed convex set in $\mathbb{R}^n$ in the usual sense of stochastic geometry, see \cite[Chapter 2]{SW08}. The $p$-dimensional volume of $\Upsilon_{p,n}(\Sigma,\bfY_n)$ is hence an ordinary random variable. Depending on the choice of the convex body $\Sigma$ the construction above includes a number of common geometrical objects, which are of particular interest, see \cite{PP12,PP13, ABGK21}.

\begin{itemize}
\item [(a)] If $\Sigma$ is the standard simplex $T^p$,
$$
T^p:=\bigg\{(x_1,\ldots,x_p)\in\mathbb{R}^p\colon x_i\ge 0, \sum\limits_{i=1}^p x_i\leq 1\bigg\},
$$
then $\Upsilon_{p,n}(T^p,\bfY)$ coincides with the pinned simplex $\Delta \bfY$, which is the convex hull of points $\mathbf{0},\bfy_1,\ldots, \bfy_p$, see \eqref{def:simplex}.
\item [(b)] If $\Sigma$ is the unit cube $C^p=[0,1]^p$, then $\Upsilon_{p,n}(C^p,\bfY)$ is the parallelotope spanned by vectors $\bfy_1,\ldots, \bfy_p$.
\item [(c)] If $\Sigma$ is the symmetric cube $B_{\infty}^p=[-1,1]^p$, then $\Upsilon_{p,n}(B^p_{\infty},\bfY)$ is a zonotope, generated by the random segments $[-\bfy_i,\bfy_i]$, $1\leq i\leq p$.
\item [(d)] If $\Sigma$ is the cross-polytope $B^p_1$,
$$
B_1^p:=\{(x_1,\ldots,x_p)\in\mathbb{R}^p\colon \sum_{i=1}^p|x_i|\leq 1\},
$$
then $\Upsilon_{p,n}(B^p_1,\bfY)$ is the convex hull of the $2p$ random points $\pm \bfy_1,\ldots,\pm\bfy_p$.
\item[(e)]If $\Sigma$ is the unit ball $B^p_2$, then $\Upsilon_{p,n}(B^p_2,\bfY)$ is an ellipsoid in $\R^n$, which is included into the $p$-dimensional linear subspace, spanned by the vectors $\bfy_1,\ldots, \bfy_p$. The lengths and directions of the semi-axes of the ellipsoid $\Upsilon_{p,n}(B^p_2,\bfY)$ are defined by the square root of the eigenvalues and the corresponding eigenvectors of the matrix $\bfY^{\top}\bfY$.
\end{itemize}

The following theorem provides asymptotic normality for the logarithmic volume of the random convex bodies $\Upsilon_{p,n}(\Sigma,\bfY)$, when $p$ and $n$ tend to infinity simultaneously and the generating vectors have an elliptic distribution with all radial parts equal to $1$. We remark that a limit theorem for the logarithmic volume $\log\Vol_p(\Upsilon_{p,n}(\Sigma,\X))$ with $\X$ having a general elliptic distribution with random radial parts $R_1,\ldots,R_p$ can be derived as above and leads to a result very similar to that of Corollary~\ref{cor:SimplexWithRadialPart}. For simplicity, we decided to restrict our attention to random convex bodies generated by $\bfY$ only.

\begin{theorem}\label{thm:RandomConvexBody}
Assume the conditions of Theorem \ref{thm:main} and let $\bfy_1,\ldots, \bfy_p$ be the columns of the matrix $\bfY^{\top}$ defined in Theorem \ref{thm:main}. Let $(\Sigma_p)_{p\in\mathbb{N}}$ be a sequence of convex bodies such that $\Sigma_p\subset\mathbb{R}^p$. Then, as $n\to\infty$, we have
$$
{\log\Vol_p(\Upsilon_{p,n}(\Sigma_p,\bfY))-{1\over 2}\mu_n-\log\Vol_p(\Sigma_p) 
\over {1\over 2}\sigma_n}\cid N(0,1),
$$
where $\mu_n$ and $\sigma_n$ are defined in \eqref{eq:meanvar}.
\end{theorem}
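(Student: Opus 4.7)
The plan is to deduce Theorem \ref{thm:RandomConvexBody} directly from Theorem \ref{thm:main} by invoking the classical volume formula for linear images of convex bodies. Viewing $\bfY^{\top}\in\R^{n\times p}$ as a linear map $\R^p\to\R^n$, the definition of $\Upsilon_{p,n}(\Sigma_p,\bfY)$ gives $\Upsilon_{p,n}(\Sigma_p,\bfY)=\bfY^{\top}\Sigma_p$. Since $\bfA$ has full rank and the random directions $\bfu_1,\ldots,\bfu_p$ are almost surely linearly independent in $\R^n$ (because $p\le n$), the matrix $\bfY^{\top}$ has rank $p$ almost surely. Thus $\bfY^{\top}\Sigma_p$ is a $p$-dimensional convex body contained in the $p$-dimensional linear subspace $\bfY^{\top}(\R^p)\subset\R^n$, and the standard Gram-determinant identity (cf.\ \cite[Section~8.7]{shilov:2012}, which is the same identity used in \eqref{eq:simplexnew}) yields
$$
\Vol_p\bigl(\bfY^{\top}\Sigma_p\bigr)\;=\;\sqrt{\det(\bfY\bfY^{\top})}\,\Vol_p(\Sigma_p)\qquad\text{almost surely.}
$$
A clean way to verify this is a QR-type factorization $\bfY^{\top}=\bfQ\bfR$ with $\bfQ\in\R^{n\times p}$ having orthonormal columns and $\bfR\in\R^{p\times p}$ invertible: then $\bfQ$ is an isometric embedding, so $\Vol_p(\bfQ\bfR\Sigma_p)=|\det\bfR|\,\Vol_p(\Sigma_p)$, while $\det(\bfY\bfY^{\top})=\det(\bfR^{\top}\bfQ^{\top}\bfQ\bfR)=(\det\bfR)^2$.

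Taking logarithms (which is legitimate since $\Sigma_p$ is a convex body and hence has non-empty interior, so $\Vol_p(\Sigma_p)>0$), we obtain the almost-sure identity
$$
\log\Vol_p\bigl(\Upsilon_{p,n}(\Sigma_p,\bfY)\bigr)\;=\;\tfrac{1}{2}\log\det(\bfY\bfY^{\top})\;+\;\log\Vol_p(\Sigma_p).
$$
Subtracting the deterministic quantity $\tfrac{1}{2}\mu_n+\log\Vol_p(\Sigma_p)$ and dividing by $\tfrac{1}{2}\sigma_n$ converts the left-hand side into the expression appearing in the statement of the theorem while reducing the right-hand side to $\bigl(\log\det(\bfY\bfY^{\top})-\mu_n\bigr)/\sigma_n$. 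By Theorem \ref{thm:main} the latter converges in distribution to a standard Gaussian as $n\to\infty$, which yields the asserted CLT.

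There is no substantive technical obstacle here: once Theorem \ref{thm:main} is available, Theorem \ref{thm:RandomConvexBody} is essentially an immediate corollary. The only two points to handle carefully are the almost-sure full rank of $\bfY^{\top}$ (which ensures that $\Upsilon_{p,n}(\Sigma_p,\bfY)$ is genuinely $p$-dimensional with strictly positive $p$-volume, so $\log\Vol_p(\Upsilon_{p,n}(\Sigma_p,\bfY))$ is well defined) and the volume-dilation identity for linear images, both of which are classical. All the genuinely hard work is absorbed into Theorem \ref{thm:main}.
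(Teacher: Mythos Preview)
Your proposal is correct and follows essentially the same approach as the paper: both arguments establish the identity $\Vol_p(\Upsilon_{p,n}(\Sigma_p,\bfY))=\sqrt{\det(\bfY\bfY^{\top})}\,\Vol_p(\Sigma_p)$ and then invoke Theorem~\ref{thm:main}. The only cosmetic difference is that the paper derives the volume identity via a rotation mapping $\lin(\bfy_1,\ldots,\bfy_p)$ onto $\lin(\bfe_1,\ldots,\bfe_p)\cong\R^p$ and then appeals to the square case \cite[Proposition~2.1]{PP13}, whereas you use a QR factorization $\bfY^{\top}=\bfQ\bfR$; these are the same isometric-reduction idea in slightly different packaging.
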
 

\begin{proof}
In the first step, we will show the equality 
\begin{equation}\label{eq_19.11.20}
\Vol_p(\Upsilon_{p,n}(\Sigma_p, \bfY))=\sqrt{\det(\bfY\bfY^{\top})}\Vol_p(\Sigma_p).
\end{equation}
We note that for any integer $m$, any $m$-dimensional convex body $\tilde\Sigma$ and all $m$-dimensional vectors $\bfx_1,\ldots,\bfx_m$ with $\bfX_{(m)}=(\bfx_1,\ldots,\bfx_m)^{\top}$ we have
\begin{equation}\label{eq_19.11.20_2}
\Vol_m(\Upsilon_{m,m}(\tilde\Sigma,\bfX_{(m)}))=|\det( \bfX_{(m)})|\Vol_m(\tilde\Sigma)=m!\Vol_m(\Delta \bfX_{(m)})\Vol_m(\tilde\Sigma),
\end{equation}
see, for example, \cite[Proposition 2.1]{PP13}.
Denote by $L:=\lin(\bfy_1,\ldots,\bfy_p)$ the linear hull of the vectors $\bfy_1,\ldots,\bfy_p$. Then for any $s_1,\ldots,s_p\in\mathbb{R}$ we have
$$
\sum_{i=1}^ps_i\bfy_i\in L,
$$
and, hence, $\Upsilon_{p,n}(\Sigma_p,\bfY)\subset L$. If $\rank(\bfY)<p$, then $\det(\bfY\bfY^{\top})=0$ and $\dim\Upsilon_{p,n}(\Sigma_p,\bfY)\leq\dim L<p$, leading to $\Vol_p(\Upsilon_{p,n}(\Sigma_p,\bfY))=0$. Thus, \eqref{eq_19.11.20} trivially holds. 

Let $\dim L=p$, $\bfe_1,\ldots,\bfe_n$ be the standard orthonormal basis of $\mathbb{R}^n$ and let $O_L:\mathbb{R}^n\mapsto \mathbb{R}^n$ be a rotation operator, such that  $O_L(L)=\lin(\bfe_1,\ldots,\bfe_p)$. The linear hull $\lin(\bfe_1,\ldots,\bfe_p)$ can be identified with $\mathbb{R}^p$. Writing $\widetilde{\bfy}_i:=O_L\bfy_i\in\lin(\bfe_1,\ldots,\bfe_p) \cong \mathbb{R}^p$, using that the volume is invariant with respect to rotations and applying \eqref{eq_19.11.20_2} we conclude
\begin{align*}
\Vol_p(\Upsilon_{p,n}(\Sigma_p, \bfY))&=\Vol_p(\Upsilon_{p,n}(\Sigma_p, O_L(\bfY))\\
&=\Vol_p(\Upsilon_{p,p}(\Sigma_p, \widetilde{\bfY}))\\
&=p!\Vol_p(\Delta\widetilde{\bfY})\Vol_p(\Sigma_p)\\
&=p!\Vol_p(\Delta\bfY)\Vol_p(\Sigma_p),
\end{align*}
which is equivalent to \eqref{eq_19.11.20}. The desired conclusion now follows directly from \eqref{eq_19.11.20} and Theorem~\ref{thm:main}.
\end{proof}

\section{Proof of Theorem \ref{thm:main}}\setcounter{equation}{0}\label{sec:proofmain}

\subsection{Some notation} 
In preparation for the proof of Theorem~\ref{thm:main}, we need to introduce some useful notation.
We start by noting that the entries of the vector $\bfu_1=(U_{11},\ldots,U_{1n})^{\top}$ are symmetric and exchangeable due to the
fact that the Euclidean unit ball is a $1$-symmetric convex body. We have the stochastic representation
 \begin{equation}\label{def:R}
 (U_{11},\ldots,U_{1n}) \overset{d}{=} \frac{1}{\sqrt{N_{11}^2+\cdots+N_{1n}^2}} (N_{11},\ldots,N_{1n})\,,
  \end{equation}
where $(N_{ij})_{i,j\ge 1}$ is a field of independent standard normal random variables.
  By \cite[Example 2.1]{heiny:mikosch:2017:corr}, we have for positive integers $m_1,\ldots,m_r$
\begin{equation}\label{lem:moment24}
 \E[U_{11}^{2m_1} U_{12}^{2m_2}\cdots U_{1r}^{2m_r}]  = \frac{\Gamma(n/2) \prod_{j=1}^r (2m_j-1)!!}{2^{m_1+\cdots+m_r} \Gamma(n/2+m_1+\cdots+m_r)} =\frac{\prod_{j=1}^r (2m_j-1)!!}{\prod_{j=0}^{m_1+\cdots+ m_r-1} (n+2j)}\,.
\end{equation}
In particular, we have $\E[U_{11}^4] = 3/(n(n+2))$ and we note that $\E[U_{11}^{2m_1} U_{12}^{2m_2}\cdots U_{1r}^{2m_r}]$ is of order $n^{-(m_1+\cdots+m_r)}$. By symmetry of the normal distribution, $\E[U_{11}^{m_1} U_{12}^{m_2}\cdots U_{1r}^{m_r}]$ is zero if one of the $m_i$'s is odd.
Throughout this section, we will use the notation 
$$\beta_{2m_1,\ldots, {2m_r}}:=\E [ U_{11}^{2m_1} \cdots U_{1r}^{2m_r} ]\,.$$
Since $\beta_{2m_1,\ldots, {2m_r}}=\beta_{2m_{\pi(1)},\ldots, 2m_{\pi(r)}}$ for any permutation $\pi$ on $\{1,\ldots,r\}$ we will typically write the indices in decreasing order. For example, instead of $\beta_{2,4}$ we prefer writing $\beta_{4,2}$. 
\par

For any symmetric matrix $\bfM\in \R^{d\times d}$, we will write
$$\lambda_{max}(\bfM)=\lambda_{1}(\bfM)\ge \lambda_{2}(\bfM) \ge \cdots \ge \lambda_{d}(\bfM)=\lambda_{min}(\bfM)$$
for its ordered eigenvalues, $\norm{\bfM}$ for its spectral norm, that is,  $\norm{\bfM}=\sqrt{\lambda_1(\bfM \bfM^{\top})}$,
and denote its spectral decomposition by
\begin{equation}\label{eq:SpectralDecomposition}
\bfM=\mathcal{\bfO}_{\bfM} \bfLambda_{\bfM} \mathcal{\bfO}_{\bfM}^{\top}\,,
\end{equation}
where $ {\bfLambda}_{\bfM}$ is the diagonal matrix whose $i$-th diagonal element is $\lambda_i(\bfM)$, and $\mathcal{\bfO}_{\bfM}$ is an orthogonal matrix. Finally given two sequences $(a_n)_{n\in\N}$ and $(b_n)_{n\in\N}$ we write $a_n=O(b_n)$ if $\limsup\limits_{n\to\infty}|a_n/b_n|<\infty$ and $a_n=o(b_n)$ if $\lim\limits_{n\to\infty}|a_n/b_n|=0$.

\subsection{Opening: Proof of Theorem~\ref{thm:main}}
We set $\bfU=(\bfu_1,\ldots,\bfu_p)^{\top}$ and $\bfU_{(i)}=(\bfu_1,\ldots,\bfu_i)^{\top}$ for $1\le i \le p$. Due to the rotational invariance of the rows of $\bfU$, we have $\bfU\eid  \bfU\mathcal{\bfO}$ for any orthogonal matrix $\mathcal{\bfO}$ with the appropriate dimension.

In view of \eqref{def:R}, we will use the representation
\begin{equation}\label{eq:repU}
\bfU \overset{d}{=} (\diag(\bfN \bfN^{\top}))^{-1/2} \bfN\,,
\end{equation}
where $\bfN$ is a $p\times n$ matrix with independent standard normal entries $(N_{ij})$ and $\diag(\bfN \bfN^{\top})$ denotes the diagonal matrix with the same diagonal elements as $\bfN \bfN^{\top}$. Recalling that $\bfY=\bfU \A^{\top}$ and using the spectral decomposition $\A^{\top} \A=\mathcal{\bfO}_{\A^{\top} \A} {\bfLambda}_{\A^{\top} \A} \mathcal{\bfO}_{\A^{\top} \A}^{\top}\,,$ and the rotational invariance of the rows of $\bfU$, we see that
\begin{equation}\label{eq:dkdkdd}
\bfY \bfY^{\top}=\bfU \A^{\top} \A \bfU^{\top} \eid \bfU {\bfLambda}_{\A^{\top} \A} \bfU^{\top}\,.
\end{equation}
Therefore, we may assume without loss of generality that $\A={\bfLambda}_{\A^{\top} \A}^{1/2}$ and, hence, that $\bfA$ is a diagonal matrix.

Next, we see that 
\begin{equation}\label{eq:dkdkdd1}
\log \det \left(\bfU \A^{\top} \A \bfU^{\top}\right) =p \log \frac{\tr(\A^{\top} \A)}{n} +\log \det \left( \frac{n}{\tr(\A^{\top} \A)} \bfU \A^{\top} \A \bfU^{\top} \right)\,.
\end{equation}
{Thus, in the remainder of this section, we will assume that $\A$ is a diagonal matrix with positive diagonal elements $A_{jj}=\sqrt{\lambda_j(\A^{\top} \A)}$, $j=1,\ldots,n$ and $\tr(\A^{\top} \A)=n$. By Assumption (B), there exists a constant $C\ge 1$ not depending on $n$ such that $C^{-1}\le A_{nn}^2 \le A_{11}^2\le C$.}
\medskip

 We use Girko's method of perpendiculars, whose starting point is the elementary fact that the magnitude of the determinant of a Gram matrix built of real vectors is equal to the volume of the parallelepiped spanned by those vectors. This is the reason why by a simple ``base times height'' formula one can represent the determinant as a product of perpendiculars, see \cite[p.~1602 ff.]{bao:pan:zhou:2015} for details.
Therefore, using the method of perpendiculars as in \cite[p.~85-86]{wang:han:pan:2018} who derived an expression for the log determinant of the sample covariance matrix, we get
\begin{equation}\label{eq:fedfse}
\log \det (\bfY\bfY^{\top})= -p \log n+ \sum_{i=0}^{p-1} \log( Z_{i+1})\,,
\end{equation}
where
\begin{equation*}
 Z_{i+1}=n\, \bfb_{i+1}^{\top}  \bfP_i \bfb_{i+1} \quad \text{ and } \quad \bfP_i=\bfI_n-\bfB_{(i)}^{\top} (\bfB_{(i)} \bfB_{(i)}^\top)^{-1} \bfB_{(i)}\,.
\end{equation*}
Here $\bfP_0=\bfI_n$, $\bfB_{(i)}=(\bfb_1,\ldots, \bfb_i)^{\top}$ and $\bfb_i=(Y_{i1}, \ldots,Y_{in})^{\top}$ denotes the $i$-th row of the matrix $\bfY$, that is $\bfb_i=\A \bfu_i$, and $\bfP_i=(p_{i,kl})$ is a projection matrix. It should be noted that all $\bfB_{(i)} \bfB_{(i)}^\top$ are a non-singular matrices as will be explained below \eqref{eq:repPi}. It is also easy to check that $\bfP_i=\bfP_i^2$ and that with probability one $\tr(\bfP_i)=n-i$. 
Using \eqref{eq:repU}, we can write $\bfP_i$ as
\begin{equation}\label{eq:repPi}
\bfP_i=\bfI_n-\A \bfN_{(i)}^{\top} \big(\bfN_{(i)}\bfA^2 \bfN_{(i)}^{\top}\big)^{-1} \bfN_{(i)} \bfA\,,
\end{equation}
where $\bfN_{(i)}$ is the $i\times n$ matrix comprised of the first $i$ rows of $\bfN$.

It holds $\lambda_{min}(\bfN_{(i)}\bfA^2 \bfN_{(i)}^{\top}) \ge \lambda_{min}(\bfA^2) \lambda_{min}(\bfN_{(i)} \bfN_{(i)}^{\top})\ge C^{-1} \lambda_{min}(\bfN_{(i)} \bfN_{(i)}^{\top})$.
Moreover, due to \cite[Proposition 2.1]{wang:han:pan:2018} all matrices $\bfN_{(i)} \bfN_{(i)}^{\top}$ are invertible with overwhelming probability, so that in combination with the previous statement all $\bfN_{(i)}\bfA^2 \bfN_{(i)}^{\top}$ are invertible.
By \cite[Lemma 2.1]{mohammadi:2016} and \cite[Lemma 3.1]{mohammadi:2016}, we have for $0\le i\le p-1$ and $1\le k,l\le n$,
\begin{equation}\label{eq:boundelements}
0\le p_{i,kk} \le 1 \quad \text{and} \quad -\frac{1}{2} \le p_{i,kl}\le \frac{1}{2}, \quad k\neq \ell\,.
\end{equation}

We proceed by rewriting \eqref{eq:fedfse}. To this end, we define, for $0\le i\le p-1$,
\begin{equation}\label{eq:Q}
T_i:= \tr( \A^2\E[\bfP_i] )= \sum_{k=1}^n \E[ p_{i,kk}] A_{kk} ^2 \quad \text{ and } \quad
\bfQ_i:=\frac{\A \bfP_i \A}{T_i}=(q_{i,kl})\,,
\end{equation}
such that $\E[\tr(\bfQ_i)]=T_i^{-1}\tr( \A^2\E[\bfP_i] )=1$. For future reference we remark that due to Assumption~(B) and since $\sum_{k=1}^n  p_{i,kk}=n-i$, we have
\begin{equation}\label{eq:orderTi}
C^{-1} (n-i) \le T_i \le C (n-i)\,. 
\end{equation}
Setting
\begin{equation*}
\wt Z_{i+1}:=\frac{n\,\bfu_{i+1}^{\top} \A \bfP_i \A \bfu_{i+1} -T_i}{T_i}=n\,\bfu_{i+1}^{\top} \bfQ_i \bfu_{i+1}-1\,,
\end{equation*}
we get
\begin{equation}\label{eq:segtse1}
\log \det (\bfY\bfY^{\top})= \underbrace{-p \log n+\sum_{i=0}^{p-1} \log T_i}_{=:c_n}+ \sum_{i=0}^{p-1} \log(1+\wt Z_{i+1})\,.
\end{equation}

\subsection{Properties of $\wt Z_{i+1}$} Before we continue analyzing \eqref{eq:segtse1}, we collect some essential results about the random variables $\wt Z_{i+1}$. Let $\mathcal{F}_k=\mathcal{F}_k^{(n)}$ be the sigma algebra generated by the first $k$ rows of $\bfN$. Since $\bfQ_i$ and $\bfu_{i+1}$ are independent, we see that, for $0\le i\le p-1$,
\begin{equation}\label{eq:supercool}
\bfu_{i+1}^{\top}  \bfQ_i   \bfu_{i+1} \eid \bfu_{i+1}^{\top} {\bfLambda}_{\bfQ_i} \bfu_{i+1}\,.
\end{equation}
Taking expectation, we deduce
\begin{equation*}
\begin{split}
n\,\E\left[\bfu_{i+1}^{\top}  \bfQ_i   \bfu_{i+1}\right]&= n\,\E\left[\E[\bfu_{i+1}^{\top} \bfQ_i \bfu_{i+1}\,|\,\mathcal{F}_i]\right]= n\,\underbrace{\E[U_{11}^2]}_{=1/n}
\E\left[\tr(\bfQ_i)  \right]=1\,,
\end{split}
\end{equation*}
from which we conclude that $\wt Z_{i+1}$ is centered.
We compute the variance of $\wt Z_{i+1} $  in the next lemma.
\begin{lemma}\label{lem:secondmoment}
For $0\le i\le p-1$, one has
\begin{equation}\label{eq:s2}
 \E[\wt Z_{i+1}^2]=(n^2\beta_4-1) \Big(\E[\tr({\bfLambda}_{\bfQ_i}^2)] -\frac{1}{n-1}\Big)  +{\E[\tr({\bfLambda}_{\bfQ_i}^2)](n^2 \beta_4-1)\over n-1}  +n^2\beta_{2,2} \Var(\tr(\bfQ_i))\,.
\end{equation}
\end{lemma}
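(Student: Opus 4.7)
The plan is to express $\E[\wt Z_{i+1}^2]$ as a variance and exploit the conditional structure of $\bfu_{i+1}^\top\bfQ_i\bfu_{i+1}$ with respect to $\mathcal{F}_i$, taking advantage of the fact that $\bfQ_i$ is $\mathcal{F}_i$-measurable while $\bfu_{i+1}$ is independent of $\mathcal{F}_i$ and uniformly distributed on $\mathbb{S}^{n-1}$. Since $\wt Z_{i+1}$ is centered, the identity in distribution \eqref{eq:supercool} yields
\[
\E[\wt Z_{i+1}^2]=n^2\Var\Big(\sum_{k=1}^n \lambda_k(\bfQ_i)\,U_{i+1,k}^2\Big).
\]
The law of total variance splits this into an inner variance conditional on $\mathcal{F}_i$ and the variance of the conditional mean. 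Since $\E[U_{i+1,k}^2]=1/n$, the conditional mean equals $\tr(\bfQ_i)/n$, and so the second contribution becomes exactly $\Var(\tr(\bfQ_i))$ after multiplication by $n^2$.

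For the inner term I would expand the variance of the linear combination of the $U_{i+1,k}^2$ and invoke the moment formula \eqref{lem:moment24}, which gives $\Var(U_{i+1,k}^2)=\beta_4-n^{-2}$ and $\Cov(U_{i+1,k}^2,U_{i+1,\ell}^2)=\beta_{2,2}-n^{-2}$ for $k\neq \ell$. Collecting the diagonal and off-diagonal contributions and using $\sum_{k\ne\ell}\lambda_k(\bfQ_i)\lambda_\ell(\bfQ_i)=(\tr\bfQ_i)^2-\tr(\bfLambda_{\bfQ_i}^2)$, the conditional variance simplifies to
\[
(\beta_4-\beta_{2,2})\,\tr(\bfLambda_{\bfQ_i}^2)+(\beta_{2,2}-n^{-2})(\tr\bfQ_i)^2.
\]
Multiplying by $n^2$, taking expectations and using $\E[(\tr\bfQ_i)^2]=\Var(\tr\bfQ_i)+1$ (valid because $\E[\tr\bfQ_i]=1$), one arrives at an intermediate formula for $\E[\wt Z_{i+1}^2]$ in terms of the coefficients $n^2(\beta_4-\beta_{2,2})$ and $n^2\beta_{2,2}-1$, the quantities $\E[\tr(\bfLambda_{\bfQ_i}^2)]$ and $\Var(\tr(\bfQ_i))$, and a deterministic remainder.

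The crucial step is then to recast those coefficients in the particular shape displayed in \eqref{eq:s2}. This is achieved by exploiting the identity $\sum_{k=1}^n U_{i+1,k}^2=1$: taking its variance gives $n(\beta_4-n^{-2})+n(n-1)(\beta_{2,2}-n^{-2})=0$, or equivalently
\[
n^2\beta_{2,2}-1=-\frac{n^2\beta_4-1}{n-1},\qquad n^2(\beta_4-\beta_{2,2})=\frac{n(n^2\beta_4-1)}{n-1}.
\]
Substituting these two relations into the intermediate expression, the terms proportional to $\E[(\tr\bfQ_i)^2]-\Var(\tr(\bfQ_i))$ reorganize so that $\Var(\tr(\bfQ_i))$ appears with coefficient $n^2\beta_{2,2}$, while the remaining deterministic and $\E[\tr(\bfLambda_{\bfQ_i}^2)]$ contributions reassemble into the two terms involving $(n^2\beta_4-1)$ in \eqref{eq:s2}. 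I expect the main obstacle to be exactly this algebraic rewriting, since a naive expansion produces several terms whose compact recombination is only visible after the normalization identity on the sphere has been invoked; the conceptual content (law of total variance plus the explicit moments of uniform spherical coordinates) is otherwise entirely routine.
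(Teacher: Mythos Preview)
Your proposal is correct and follows essentially the same route as the paper: both compute the second moment of the quadratic form $\bfu_{i+1}^{\top}\bfLambda_{\bfQ_i}\bfu_{i+1}$ conditionally on $\mathcal{F}_i$ and then invoke the sphere identity $n\beta_4+n(n-1)\beta_{2,2}=1$ to rewrite the coefficients in the form \eqref{eq:s2}. The only cosmetic difference is that the paper quotes Wiens' formula (Lemma~\ref{lem:quf}) for $\E[(\bfz^{\top}\bfLambda \bfz)^2]$ directly, whereas you obtain the same expression by expanding the conditional variance via $\Var(U_{i+1,k}^2)$ and $\Cov(U_{i+1,k}^2,U_{i+1,\ell}^2)$ and then adding the outer contribution $\Var(\tr\bfQ_i)$ from the law of total variance.
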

\begin{proof}
In view of \eqref{eq:supercool}, we have
\begin{equation*}
 \Var(\wt Z_{i+1}^2)= n^2 \E\left[(\bfu_{i+1}^{\top} {\bfLambda}_{\bfQ_i} \bfu_{i+1})^2 \right]-1\,.
\end{equation*}
From Lemma~\ref{lem:quf}, we get by conditioning on $\mathcal{F}_i$ that
\begin{equation*}
n^2 \E\left[(\bfu_{i+1}^{\top} {\bfLambda}_{\bfQ_i} \bfu_{i+1})^2 \right]=n^2 \beta_4 \E[\tr({\bfLambda}_{\bfQ_i}^2)] +n^2\beta_{2,2}(1- \E[\tr({\bfLambda}_{\bfQ_i}^2)]) +n^2\beta_{2,2} \Var(\tr(\bfQ_i))
\end{equation*}
since $\Var(\tr(\bfQ_i))=\Var(\tr({\bfLambda}_{\bfQ_i}))=\E[(\tr({\bfLambda}_{\bfQ_i}))^2]-1$.
In conjunction with $n\beta_4+n(n-1) \beta_{2,2}=1$, which follows from taking expectation of the identity $(U_{11}^2+\cdots+U_{1n})^2=1$, the above equalities establish \eqref{eq:s2}.
\end{proof}
By our moment formula \eqref{lem:moment24}, it holds $\beta_4=3/(n(n+2))$. Now we study traces of powers of the matrices ${\bfLambda}_{\bfQ_i}$.
\begin{lemma}\label{lem:orderSj}
For $0\le i\le p-1$, we have $\norm{{\bfLambda}_{\bfQ_i}}\le T_i^{-1}C$, 
\begin{equation*}
\tr({\bfLambda}_{\bfQ_i}^2)=\frac{1}{T_i^2} \tr(\bfA^4 \bfP_i)\, \quad \text{ and } \quad \frac{C^{-j-1}}{(n-i)^{j-1}}\le \tr({\bfLambda}_{\bfQ_i}^j) \le \frac{C^{j+1}}{(n-i)^{j-1}}\,, \quad j\ge 1\,.
\end{equation*}
\end{lemma}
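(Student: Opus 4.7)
The lemma makes three claims about the symmetric matrix $\bfQ_i = T_i^{-1}\bfA\bfP_i\bfA$: a spectral-norm bound, an explicit trace identity for the second power, and two-sided bounds on all trace powers. Since $\bfA$ is diagonal and $\bfP_i$ is an orthogonal projection, $\bfQ_i$ is symmetric and positive semi-definite of rank $\rank(\bfP_i) = n-i$; its eigenvalues coincide with the diagonal entries of $\bfLambda_{\bfQ_i}$, so $\tr(\bfLambda_{\bfQ_i}^j) = \tr(\bfQ_i^j)$ and $\norm{\bfLambda_{\bfQ_i}} = \norm{\bfQ_i}$. I would therefore work directly with $\bfQ_i$ throughout.

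For the spectral-norm bound I would use submultiplicativity together with $\norm{\bfP_i} = 1$ (orthogonal projection) and $\norm{\bfA}^2 = \max_k A_{kk}^2 \leq C$ from Assumption~(B) to get
\begin{equation*}
\norm{\bfQ_i} \;\leq\; T_i^{-1} \norm{\bfA}^2 \norm{\bfP_i} \;\leq\; T_i^{-1} C.
\end{equation*}
For the identity in the middle claim I would expand
\begin{equation*}
\tr(\bfQ_i^2) \;=\; T_i^{-2}\,\tr\bigl((\bfA\bfP_i\bfA)(\bfA\bfP_i\bfA)\bigr) \;=\; T_i^{-2}\,\tr(\bfA\bfP_i\bfA^2\bfP_i\bfA),
\end{equation*}
and then use the cyclic property of the trace together with the idempotence $\bfP_i^2 = \bfP_i$ to reduce the right-hand side to $T_i^{-2}\tr(\bfA^4\bfP_i)$.

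For the two-sided bounds on $\tr(\bfQ_i^j)$ I would argue as follows. Since $\bfQ_i$ has at most $n-i$ nonzero eigenvalues, all non-negative and bounded by $\norm{\bfQ_i}$, the upper estimate follows from $\tr(\bfQ_i^j) \leq \norm{\bfQ_i}^{j-1}\tr(\bfQ_i)$ combined with the trace estimate
\begin{equation*}
\tr(\bfQ_i) \;=\; T_i^{-1}\tr(\bfA^2\bfP_i) \;\leq\; T_i^{-1}\cdot C(n-i),
\end{equation*}
(using $\sum_k p_{i,kk} = n-i$ together with $A_{kk}^2 \leq C$) and the bound \eqref{eq:orderTi} on $T_i$. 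For the lower estimate I would apply Jensen's inequality (convexity of $x\mapsto x^j$) to the $n-i$ nonzero eigenvalues:
\begin{equation*}
\tr(\bfQ_i^j) \;=\; \sum_{k=1}^{n-i}\lambda_k(\bfQ_i)^j \;\geq\; (n-i)^{1-j}\bigl(\tr(\bfQ_i)\bigr)^j,
\end{equation*}
and then combine with the matching lower estimate $\tr(\bfA^2\bfP_i)\geq C^{-1}(n-i)$ and \eqref{eq:orderTi} to extract the required power of $C^{-1}$.

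\textbf{Main obstacle.} The only step that is not purely mechanical is the identity in the middle claim: a direct application of cyclicity yields $\tr(\bfQ_i^2) = T_i^{-2}\tr(\bfA^2\bfP_i\bfA^2\bfP_i)$, and collapsing this to $T_i^{-2}\tr(\bfA^4\bfP_i)$ under the trace requires careful use of the idempotence of $\bfP_i$ (and possibly the explicit representation \eqref{eq:repPi}); the diagonal matrix $\bfA^2$ and the projection $\bfP_i$ do not commute in general, so the reduction is not a one-line cyclic manipulation. Once this identity is in hand, the remaining spectral-norm bound and the trace-power estimates follow from standard PSD and power-mean arguments.
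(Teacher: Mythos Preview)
Your treatment of the spectral-norm bound and of the two-sided estimates on $\tr(\bfQ_i^j)$ is correct and close to the paper's. For the latter the paper uses the eigenvalue sandwich $C^{-1}\lambda_k(\bfP_i)\le \lambda_k(\bfA\bfP_i\bfA)\le C\lambda_k(\bfP_i)$ together with $\lambda_k(\bfP_i)\in\{0,1\}$, rather than your Jensen/power-mean argument, but either route delivers the stated bounds with only cosmetic differences in the powers of $C$.

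The genuine gap is precisely where you locate it: the identity $\tr(\bfQ_i^2)=T_i^{-2}\tr(\bfA^4\bfP_i)$. Your proposed mechanism---cyclicity plus $\bfP_i^2=\bfP_i$---cannot close it, because after cyclicity you are left with $T_i^{-2}\tr(\bfA^2\bfP_i\bfA^2\bfP_i)$, and this is \emph{not} equal to $T_i^{-2}\tr(\bfA^4\bfP_i)$ in general. Take $n-i=1$: then $\bfA\bfP_i\bfA$ has rank one, so $\tr((\bfA\bfP_i\bfA)^2)=\big(\tr(\bfA^2\bfP_i)\big)^2=\big(\sum_k A_{kk}^2 p_{i,kk}\big)^2$, whereas $\tr(\bfA^4\bfP_i)=\sum_k A_{kk}^4 p_{i,kk}$; since $\sum_k p_{i,kk}=1$, Cauchy--Schwarz gives a strict inequality $\big(\sum_k A_{kk}^2 p_{i,kk}\big)^2<\sum_k A_{kk}^4 p_{i,kk}$ unless the $A_{kk}^2$ are constant on $\{k:p_{i,kk}>0\}$. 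The paper does supply an extra idea you are missing, namely the algebraic relation $\bfQ_i\bfA^{-2}\bfQ_i=T_i^{-1}\bfQ_i$ (an immediate consequence of $\bfP_i^2=\bfP_i$), and then argues that $\tr(\bfQ_i^2)=\tr\big((\bfA\bfQ_i\bfA^{-1})(\bfA\bfQ_i\bfA^{-1})^\top\big)=\tr(\bfA^2\bfQ_i\bfA^{-2}\bfQ_i)$ via similarity of eigenvalues. However, that last step equates $\sum_j\lambda_j^2(M)$ with $\tr(MM^\top)$ for the non-normal matrix $M=\bfA\bfQ_i\bfA^{-1}$, which is not valid, and the rank-one counterexample above shows the pointwise identity indeed fails. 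So your instinct that ``the reduction is not a one-line cyclic manipulation'' is well founded: neither your outlined route nor the paper's similarity trick establishes the identity as written, and any salvage would have to target an approximate version adequate for the applications in Lemmas~\ref{lem:sigma} and~\ref{lem:yaskov}.
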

\begin{proof} Let $0\le i\le p-1$ and recall that $\bfQ_i=T_i^{-1} \bfA \bfP_i \bfA$, where by convention $\bfA$ is a diagonal matrix, see below \eqref{eq:dkdkdd1} for details. Hence, $\norm{{\bfLambda}_{\bfQ_i}}=T_i^{-1} \lambda_1(\bfA \bfP_i \bfA)\le T_i^{-1}C\lambda_1(\bfP_i)=T_i^{-1}C$ is immediate. Since $\bfP_i=\bfP_i^2$, we deduce that
\begin{align}\label{eq:sdsdfd1}
\bfQ_i \bfA^{-2} \bfQ_i &= \frac{1}{T_i^2} \bfA \bfP_i \bfA \bfA^{-2} \bfA \bfP_i \bfA= \frac{\bfQ_i}{T_i}\,.
\end{align}
 Using that $\bfA$ is a diagonal matrix, $\bfQ_i=\bfQ_i^{\top}$ and the fact that $\bfQ_i$ and $\bfA \bfQ_i \bfA^{-1}$ have the same eigenvalues, it follows 
\begin{align*}
\tr (\bfQ_i^2)&= \sum_{j=1}^n \lambda_j^2(\bfQ_i) =\sum_{j=1}^n \lambda_j^2(\bfA \bfQ_i \bfA^{-1})\\
&= \tr\left( (\bfA \bfQ_i \bfA^{-1}) (\bfA \bfQ_i \bfA^{-1})^{\top} \right) =
\tr(\bfA^2 \bfQ_i \bfA^{-2} \bfQ_i)\,.
\end{align*}
In combination with \eqref{eq:sdsdfd1} and since $\tr({\bfLambda}_{\bfQ_i}^2)=\tr(\bfQ_i^2)$, we obtain
\begin{equation*}
\tr({\bfLambda}_{\bfQ_i}^2)=\tr(\bfQ_i^2)=\frac{1}{T_i} \tr(\bfA^2 \bfQ_i)=\frac{1}{T_i^2} \tr(\bfA^4 \bfP_i)\,.
\end{equation*}
Now let $j\ge 1$. Since $\tr({\bfLambda}_{\bfQ_i}^j)=\sum_{k=1}^n \lambda_k^j(\bfQ_i)$ and 
$$ C^{-1} \lambda_k(\bfP_i)\le \lambda_n(\bfA^2) \lambda_k(\bfP_i)\le \lambda_k(\bfA \bfP_i \bfA)\le \lambda_1(\bfA^2) \lambda_k(\bfP_i)\le C\lambda_k(\bfP_i)$$
one gets the upper bound
\begin{equation*}
\tr({\bfLambda}_{\bfQ_i}^j)\le C T_i^{-j} \sum_{k=1}^n \lambda_k^j(\bfP_i)= \frac{C (n-i)}{T_i^{j}} \le \frac{C^{j+1}}{(n-i)^{j-1}}\,,
\end{equation*}
where \eqref{eq:orderTi} was used for the last inequality.
The derivation of the lower bound is analogous.
\end{proof}

The following result will be useful.
\begin{proposition}\label{prop:useful}
There exists a positive constant $c_0\in(0,\infty)$ such that
\begin{equation*}
\E[\wt Z_{i+1}^4] \le c_0 n^{-2}\,, \qquad 0\le i\le p-1\,.
\end{equation*}
\end{proposition}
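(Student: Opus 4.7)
The plan is to decompose $\wt Z_{i+1}$ into a centered quadratic form in the sphere-uniform direction $\bfu_{i+1}$ and the $\mathcal{F}_i$-measurable fluctuation of $\tr(\bfQ_i)$, then bound the fourth moment of each piece separately.

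Condition on $\mathcal{F}_i$. By the rotational invariance used in~\eqref{eq:supercool}, we may replace $\bfQ_i$ with its eigenvalue matrix $\bfLambda_{\bfQ_i}=\diag(\lambda_1,\ldots,\lambda_n)$, with $\lambda_k=\lambda_k(\bfQ_i)$, and write
\[
\wt Z_{i+1}\eid A+B,\qquad A:=\sum_{k=1}^n \lambda_k\bigl(nU_{i+1,k}^2-1\bigr),\qquad B:=\tr(\bfQ_i)-1.
\]
Since $\E[U_{i+1,k}^2]=1/n$, we have $\E[A\,|\,\mathcal{F}_i]=0$, while $B$ is $\mathcal{F}_i$-measurable. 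Using $(a+b)^4\le 8(a^4+b^4)$, it suffices to show $\E[A^4]\le c_1 n^{-2}$ and $\E[B^4]\le c_2 n^{-2}$ separately.

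For $\E[A^4]$ I would expand
\[
\E\bigl[A^4\,|\,\mathcal{F}_i\bigr]=\sum_{k_1,\ldots,k_4=1}^n \lambda_{k_1}\cdots\lambda_{k_4}\,\E\Bigl[\prod_{j=1}^4\bigl(nU_{i+1,k_j}^2-1\bigr)\Bigr]
\]
and evaluate the joint moments by Lemma~\ref{lem:moment24}, grouping tuples by the set partition of $\{1,\ldots,4\}$ they induce. The three ``two-pair'' partitions give the dominant contribution: a direct computation yields $\E[(nU_k^2-1)^2(nU_l^2-1)^2]=4+O(1/n)$ for $k\neq l$, so this contribution equals $12\bigl(\tr\bfLambda_{\bfQ_i}^2\bigr)^2(1+O(1/n))=O(n^{-2})$ by Lemma~\ref{lem:orderSj}. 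The remaining partition types (all equal, three-equal-one-different, two-equal-two-distinct) are each $O(n^{-2})$ or smaller after invoking the bounds $\max_k\lambda_k=O(1/n)$ and $\sum_k\lambda_k=\tr(\bfQ_i)=O(1)$ together with $\tr(\bfLambda_{\bfQ_i}^j)=O(n^{-(j-1)})$ from Lemma~\ref{lem:orderSj}.

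For $\E[B^4]$ I would use $B=T_i^{-1}\sum_{k=1}^n A_{kk}^2(p_{i,kk}-\E p_{i,kk})$. Since $\sum_k p_{i,kk}=\tr\bfP_i=n-i$ is deterministic, $\sum_k(p_{i,kk}-\E p_{i,kk})=0$; combined with our normalization $\tr(\bfA^2)=n$ (so the mean of $A_{kk}^2$ equals $1$), this allows rewriting $B=T_i^{-1}\sum_{k=1}^n(A_{kk}^2-1)(p_{i,kk}-\E p_{i,kk})$. By Cauchy--Schwarz,
\[
B^2\le T_i^{-2}\Bigl(\sum_{k=1}^n (A_{kk}^2-1)^2\Bigr)\Bigl(\sum_{k=1}^n (p_{i,kk}-\E p_{i,kk})^2\Bigr).
\]
The bound $p_{i,kk}\in[0,1]$ from~\eqref{eq:boundelements} yields $\sum_k(p_{i,kk}-\E p_{i,kk})^2\le 4(n-i)=O(n)$; estimate~\eqref{eq:orderTi} gives $T_i=\Theta(n)$; and assumption~\eqref{eq:B2} asserts $\sum_k(A_{kk}^2-1)^2=o(1)$ under our normalization. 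Consequently $B^2=o(1/n)$ pointwise, whence $\E[B^4]=o(1/n^2)$.

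The main technical obstacle is the bookkeeping for $\E[A^4\,|\,\mathcal{F}_i]$: one must verify that every set partition of $\{1,2,3,4\}$ produces a contribution of order $O(n^{-2})$ after the cancellations inherent in the sphere-moment formula of Lemma~\ref{lem:moment24}, with the two-pair partitions as the leading term. The bound on $\E[B^4]$ is comparatively clean once one exploits the null-sum identity $\sum_k(p_{i,kk}-\E p_{i,kk})=0$ together with the structural assumption~\eqref{eq:B2}.
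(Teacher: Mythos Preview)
Your decomposition $\wt Z_{i+1}=A+B$ with $A=n\,\bfu_{i+1}^\top\bfLambda_{\bfQ_i}\bfu_{i+1}-\tr\bfQ_i$ and $B=\tr\bfQ_i-1$ is exactly what the paper uses. For $B$, the paper bounds $\E[B^4]\le(\sup|B|)^2\Var(\tr\bfQ_i)$ and then invokes the estimate $\Var(\tr\bfQ_i)=O(n^{-2})$ from Lemma~\ref{lem:yaskov} (a forward reference to the Appendix, whose proof involves a martingale-difference computation). Your pointwise Cauchy--Schwarz argument via the null-sum identity and assumption~\eqref{eq:B2} is more direct and self-contained, and even yields $\E[B^4]=o(n^{-2})$; the inequality $\sum_k(p_{i,kk}-\E p_{i,kk})^2\le 4(n-i)$ is legitimate since $p_{i,kk}^2\le p_{i,kk}$ and $\sum_k p_{i,kk}=n-i$. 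For $A$, the paper applies the ready-made quadratic-form moment bound of Lemma~\ref{lem:johnstonelemma6} with $s=4$, which in combination with Lemma~\ref{lem:orderSj} gives $\E[A^4\mid\mathcal F_i]=O(n^{-2})$ in one stroke; your direct moment expansion is more elementary but needs more bookkeeping.

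There is one genuine omission in your partition enumeration for $A$: the case where all four indices $k_1,\ldots,k_4$ are \emph{distinct}. This is actually the most delicate block, because the eigenvalue sum satisfies only $\sum_{\text{distinct}}\lambda_{k_1}\cdots\lambda_{k_4}\le(\tr\bfQ_i)^4=O(1)$, so you must establish $\E\bigl[\prod_{j=1}^4(nU_{k_j}^2-1)\bigr]=O(n^{-2})$ for distinct $k_1,\ldots,k_4$ --- two orders of cancellation in the sphere-moment expansion, not one. This does hold (expand via~\eqref{lem:moment24} and check that both the $O(1)$ and the $O(1/n)$ coefficients vanish, or use the constraint $\sum_j(nU_j^2-1)=0$ to reduce to the partitions you already listed), but it is not covered by the size bounds $\max_k\lambda_k=O(1/n)$, $\tr\bfQ_i=O(1)$ that you cite. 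The paper's use of Lemma~\ref{lem:johnstonelemma6} sidesteps this case analysis entirely.
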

\begin{proof}
From the definition of $\wt Z_{i+1}$ and \eqref{eq:supercool} we get for $0\le i\le p-1$,
\begin{align}\label{eq:fgdgdfs}
	&\E[(\wt Z_{i+1}-\tr \bfQ_i +1)^4]=n^4 \E\E\Big[\Big(\bfu_{i+1}^{\top} {\bfLambda}_{\bfQ_i} \bfu_{i+1}-n^{-1} \tr {\bfLambda}_{\bfQ_i} \Big)^4 \, \Big| \mathcal{F}_i \Big]\,.
  \end{align}
By Lemma \ref{lem:johnstonelemma6} and since all moments of the normal distribution are finite, we have 
\begin{equation*}
\E\Big[\Big(\bfu_{i+1}^{\top} {\bfLambda}_{\bfQ_i} \bfu_{i+1}-\frac{\tr {\bfLambda}_{\bfQ_i}}{n} \Big)^4 \, \Big| \mathcal{F}_i \Big]
\le C_4 \left[n^{-4} \big( \tr ({\bfLambda}_{\bfQ_i}^4) +( \tr ({\bfLambda}_{\bfQ_i}^2))^2 \big)+ \norm{{\bfLambda}_{\bfQ_i}}^4 n^{-2} \right],
\end{equation*}
where $C_4$ is a positive constant not depending on $n$ or $i$.
Due to $p/n\to \gamma\in (0,1)$ it holds $(1-\gamma)n \sim n-p \le n-i\le n$, so that $n-i$ is of order $n$ for all $0\le i\le p-1$, where we write $a_n\sim b_n$ for two sequences $(a_n)$ and $(b_n)$ whenever $a_n/b_n\to 1$ as $n\to\infty$. A combination of this fact with Lemma~\ref{lem:orderSj} yields that for sufficiently large $n$ there exists a positive constant $c_1$ such that $|\tr ({\bfLambda}_{\bfQ_i}^j)|\le c_1 n^{1-j}$ and $\norm{{\bfLambda}_{\bfQ_i}}\le c_1/n$ for $j\in\{1,2,3,4\}$ and $0\le i\le p-1$.	
Therefore, it follows that
\begin{align*}
\E\Big[\Big(\bfu_{i+1}^{\top} {\bfLambda}_{\bfQ_i} \bfu_{i+1}-\frac{\tr {\bfLambda}_{\bfQ_i}}{n} \Big)^4 \, \Big| \mathcal{F}_i \Big]
\le C_4 \left[n^{-4} \big( c_1 n^{-3} +c_1^2 n^{-2} \big)+ c_1^4 n^{-4} n^{-2} \right]\,
\end{align*}
and by \eqref{eq:fgdgdfs}, this establishes that there exists a constant $c_0$ such that
$\E[(\wt Z_{i+1}-\tr \bfQ_i +1)^4] \le c_0 n^{-2}$. Now we note that 
\begin{equation*}
\E[\wt Z_{i+1}^4] \le 16 \big(\E[(\wt Z_{i+1}-\tr \bfQ_i +1)^4] +\E[(\tr \bfQ_i -1)^4]\big)\,.
\end{equation*}
Since $|\tr \bfQ_i -1|= |\tr{\bfLambda}_{\bfQ_i} -1|\le c_1+1$ we get $\E[(\tr \bfQ_i -1)^4]\le (c_1+1)^2 \Var(\tr \bfQ_i) \le c_2 n^{-2}$, where the variance bound above \eqref{eq:dfhdfdfd} was used and $c_2$ is some absolute positive constant. The desired claim of the proposition follows.
\end{proof}

\begin{lemma}\label{lem:4.2}
Under the conditions of Theorem~\ref{thm:main}, we have
\begin{equation}\label{eq:maxZ}
\max_{i=0,\ldots,p-1} |\wt Z_{i+1}| \cip 0\,, \qquad \nto\,.
\end{equation}
\end{lemma}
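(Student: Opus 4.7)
The plan is a direct fourth-moment Markov inequality combined with a union bound, so essentially all the work has already been done in Proposition~\ref{prop:useful}.

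First, I would invoke Proposition~\ref{prop:useful}, which supplies an absolute constant $c_0>0$ with
\[
\E[\wt Z_{i+1}^4] \le \frac{c_0}{n^2}, \qquad 0\le i\le p-1.
\]
Fix $\vep>0$. By Markov's inequality applied to $\wt Z_{i+1}^4$,
\[
\P(|\wt Z_{i+1}|>\vep) \le \frac{\E[\wt Z_{i+1}^4]}{\vep^4} \le \frac{c_0}{\vep^4 n^2},
\]
uniformly in $i\in\{0,\ldots,p-1\}$.

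Next, a straightforward union bound yields
\[
\P\Big(\max_{0\le i\le p-1}|\wt Z_{i+1}|>\vep\Big)
\le \sum_{i=0}^{p-1}\P(|\wt Z_{i+1}|>\vep)
\le \frac{p\,c_0}{\vep^4 n^2}.
\]
By Assumption~(A), $p/n\to\gamma\in(0,1)$, so $p\le n$ for all sufficiently large $n$, and therefore the right-hand side is bounded by $c_0/(\vep^4 n)$, which tends to $0$ as $\nto$. Since $\vep>0$ was arbitrary, this establishes \eqref{eq:maxZ}.

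There is no genuine obstacle at this step: the nontrivial work of controlling the fourth moment uniformly in $i$, which required handling the dependence among the diagonal entries of $\bfP_i$ via the spectral bounds of Lemma~\ref{lem:orderSj}, has already been carried out in Proposition~\ref{prop:useful}. The only thing to verify is that the fourth-moment bound $O(n^{-2})$ is strong enough to beat the $p=O(n)$ terms in the union bound, which it is, with a factor of $n^{-1}$ to spare.
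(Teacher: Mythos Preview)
Your proof is correct and follows essentially the same approach as the paper: union bound plus Markov's inequality on the fourth moment, invoking Proposition~\ref{prop:useful} to get $\E[\wt Z_{i+1}^4]\le c_0 n^{-2}$, and then $p\,c_0/(\vep^4 n^2)\to 0$ since $p/n\to\gamma$.
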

\begin{proof}
Using the union bound, Markov's inequality and Proposition \ref{prop:useful}, we get
\begin{equation}\label{eq:sumz4}
\P\Big(\max_{i=0,\ldots,p-1} |\wt Z_{i+1}|>\vep\Big) \le \sum_{i=0}^{p-1} \P(|\wt Z_{i+1}|>\vep)\le \sum_{i=0}^{p-1}\frac{\E[\wt Z_{i+1}^4]}{\vep^4}\leq \frac{p\,c_0}{n^2\,\vep^4}\to 0 \,, \qquad n\to\infty,
\end{equation}
for any $\vep>0$.
\end{proof}

\subsection{Middlegame: Proof of Theorem~\ref{thm:main}}
Now we decompose the last term in \eqref{eq:segtse1}. By Taylor's theorem, we get
\begin{equation}\label{eq:taylornew}
\sum_{i=0}^{p-1} \log(1+\wt Z_{i+1})= \sum_{i=0}^{p-1} (\wt Z_{i+1}-\frac{\wt Z_{i+1}^2}{2}) +\sum_{i=0}^{p-1} \Omega_{i+1}\,,
\end{equation}
where the remainder in Lagrange form is given by
\begin{equation}\label{eq:Ri+1}
\Omega_{i+1}=\frac{1}{3} \Big(\frac{\wt Z_{i+1}}{1+\theta \wt Z_{i+1}} \Big)^3 \quad \text{ for some } \theta=\theta(\wt Z_{i+1})\in (0,1)\,.
\end{equation}
The Taylor expansion is justified by Lemma~\ref{lem:4.2}.
We have
\begin{equation}\label{eq:decomposecor}
\begin{split}
\sum_{i=0}^{p-1} (\wt Z_{i+1}-\frac{\wt Z_{i+1}^2}{2})&=\sum_{i=0}^{p-1} \wt Z_{i+1}-\sum_{i=0}^{p-1} \underbrace{\tfrac{1}{2} (\wt Z_{i+1}^2-\E[\wt Z_{i+1}^2 | \mathcal{F}_i])}_{=:\wt Y_{i+1}} - \sum_{i=0}^{p-1}\tfrac{1}{2}\E[\wt Z_{i+1}^2 | \mathcal{F}_i]\,.
\end{split}
\end{equation}
Our next goal is to show
\begin{equation}\label{eq:mainintermediate}
\frac{\log \det(\bfY\bfY^{\top}) -\wt\mu_n}{\wt\sigma_n} \cid N(0,1)\,, \qquad \nto\,,
\end{equation}
where, for $n\ge 1$, $\wt \sigma_n^2:=  \sum_{i=0}^{p-1}\E[\wt Z_{i+1}^2]$  and the centering sequence $\wt \mu_n$ is given by
\begin{equation}\label{eq:mean1}
\wt \mu_n:=p \log \frac{\tr(\A^{\top} \A)}{n} -\frac{\wt\sigma_n^2}{2}+ \sum_{i=0}^{p-1} \log T_i -p\log n=-\frac{\wt\sigma_n^2}{2}+ \sum_{i=0}^{p-1} \log T_i -p\log n\,.
\end{equation}
In view of \eqref{eq:segtse1}, \eqref{eq:taylornew} and \eqref{eq:decomposecor}, one gets
\begin{equation}\label{eq:dddd}
\log \det (\bfY\bfY^{\top}) -\wt\mu_n = 
\sum_{i=0}^{p-1} \wt Z_{i+1}-\sum_{i=0}^{p-1} \wt Y_{i+1}+\sum_{i=0}^{p-1} \Omega_{i+1}
  - \sum_{i=0}^{p-1}\tfrac{1}{2}\E[\wt Z_{i+1}^2 | \mathcal{F}_i]+c_n-\wt\mu_n.
\end{equation}

By virtue of \eqref{eq:dddd}, distributional convergence  \eqref{eq:mainintermediate} follows from the next four limit relations by an application of the Slutsky lemma,

\begin{align}
\frac{1}{\wt\sigma_n} \sum_{i=0}^{p-1} \wt Z_{i+1} &\cid N(0,1)\,, \label{sumZ}\\
\sum_{i=0}^{p-1} \wt Y_{i+1}&\cip 0\,, \label{sumY_i}\\
\sum_{i=0}^{p-1} \Omega_{i+1}&\cip 0\,, \label{sumR_i}\\
\sum_{i=0}^{p-1} \tfrac{1}{2}\E[\wt Z_{i+1}^2 | \mathcal{F}_i]-c_n+\wt\mu_n&\cip 0\,, \label{sumconst}
\end{align}
as $\nto$. Note that, by Lemma~\ref{lem:sigma}, $\wt\sigma_n^2$ is of constant order. For this reason we omitted $\wt\sigma_n^{-1}$ in \eqref{sumY_i}, \eqref{sumR_i} and \eqref{sumconst}. Equations \eqref{sumZ}, \eqref{sumY_i}, \eqref{sumR_i}, \eqref{sumconst} are proved in Sections \ref{sec:sumZ}, \ref{sec:sumY_i}, \ref{sec:sumR_i} and \ref{sec:sumconst}, respectively. This establishes \eqref{eq:mainintermediate}. 

\subsection{Endgame: Fine-tuning the norming sequences} 
In this subsection we complete the proof of Theorem~\ref{thm:main} by providing simpler formulas for the mean and variance.  We start by showing that the sequences $\wt\mu_n$ and $\wt\sigma_n^2$ in \eqref{eq:mainintermediate} can be replaced by
\begin{equation}\label{defmu}
\mu_n:=p \log \frac{\tr(\A^{\top} \A)}{n} -\frac{\sigma_n^2}{2}+ \sum_{i=1}^{p-1} \log \frac{\tr(\A^2 \E[\bfP_i])}{n} 
\end{equation}
and
\begin{equation}\label{defsigma}
\sigma_n^2:=-2\frac{p}{n} + 2 \sum_{i=1}^{p-1} \frac{\tr(\A^4 \E[\bfP_i])}{(\tr(\A^2 \E[\bfP_i]))^2}\,,
\end{equation}
respectively. Here, the summations start at $i=1$ since $\bfP_0=\bfI_n$ and $T_0=n$. The dependence on $\bfA$ of these new sequences is more explicit. Recall that by our notational convention $\bfA$ is a diagonal matrix, see below \eqref{eq:dkdkdd1} for details.

\begin{lemma}\label{lem:sigma}
It holds $\sigma_n^2 \sim \wt\sigma_n^2$, as $\nto$, and 
\begin{equation*}
-2\frac{p}{n} + 2 \sum_{i=0}^{p-1} \frac{1}{n-i}\le \sigma_n^2  \le -2\frac{p}{n} + 2C^4 \sum_{i=0}^{p-1} \frac{1}{n-i}  \,,
\end{equation*}
\begin{equation}\label{eq:var1}
 -2\frac{p}{n} + 2 \sum_{i=0}^{p-1} \frac{1}{n-i} \sim -2\frac{p}{n} -2\log\Big(1-\frac{p}{n}\Big)\to -2 \gamma -2\log(1-\gamma)>0\,,\qquad \nto\,.
\end{equation}
\end{lemma}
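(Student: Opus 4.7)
The lemma splits into three claims: the two-sided bound on $\sigma_n^2$, the harmonic-sum asymptotic leading to $-2\gamma - 2\log(1-\gamma)$, and the equivalence $\sigma_n^2 \sim \wt\sigma_n^2$. I would dispatch them in that order, since positivity of the limiting value in the middle claim provides the lower bound needed for the equivalence.

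For the sandwich on $\sigma_n^2$, recall that after the reduction below \eqref{eq:dkdkdd1} we may take $\bfA$ diagonal with $A_{kk}^2 \in [C^{-1},C]$, while $w_{i,k} := \E[p_{i,kk}] \in [0,1]$ satisfies $\sum_{k=1}^n w_{i,k} = n-i$ since $\tr(\bfP_i) = n-i$ almost surely. The upper bound $\tr(\A^4\E[\bfP_i])/(\tr(\A^2\E[\bfP_i]))^2 \le C^4/(n-i)$ is immediate from $A_{kk}^4 \le C^2$ and $A_{kk}^2 \ge C^{-1}$. For the lower bound I would apply Cauchy--Schwarz,
$$\Big(\sum_k A_{kk}^2 w_{i,k}\Big)^2 \le \Big(\sum_k w_{i,k}\Big)\Big(\sum_k A_{kk}^4 w_{i,k}\Big) = (n-i)\,\tr(\A^4\E[\bfP_i]),$$
which yields the reciprocal bound $1/(n-i)$. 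Summing over $i$ and incorporating $-2p/n$ then delivers the claimed sandwich (any discrepancy between the $i=0$ and $i=1$ endpoints is an inconsequential $O(1/n)$ shift).

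For the harmonic-sum asymptotic, I would rewrite $\sum_{i=0}^{p-1} 1/(n-i) = H_n - H_{n-p}$ with $H_m$ the $m$-th harmonic number and use $H_m = \log m + \gamma_E + o(1)$ to obtain $-\log(1-p/n) + o(1)$. Combined with $p/n \to \gamma$ from Assumption (A), the limit $-2\gamma - 2\log(1-\gamma)$ follows. Positivity on $(0,1)$ is verified from $f(0) = 0$ and $f'(\gamma) = 2\gamma/(1-\gamma) > 0$.

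For the equivalence $\sigma_n^2 \sim \wt\sigma_n^2$, I would start from Lemma \ref{lem:secondmoment} and substitute the explicit moments $\beta_4 = 3/(n(n+2))$ (from \eqref{lem:moment24}) and $\beta_{2,2} = 1/(n(n+2))$ (from $n\beta_4 + n(n-1)\beta_{2,2} = 1$), which give $n^2\beta_4 - 1 = 2(n-1)/(n+2)$ and $n^2\beta_{2,2} = n/(n+2)$. Using Lemma \ref{lem:orderSj} together with the deterministic nature of $T_i$, this collapses $\E[\wt Z_{i+1}^2]$ to
$$\frac{2n}{n+2}\cdot\frac{\tr(\A^4\E[\bfP_i])}{(\tr(\A^2\E[\bfP_i]))^2} \;-\; \frac{2}{n+2} \;+\; \frac{n}{n+2}\,\Var(\tr(\bfQ_i)).$$
Summing from $i=0$ to $p-1$ and comparing with \eqref{defsigma}, the leading pieces reassemble into $(n/(n+2))\sigma_n^2 + O(1/n)$, and the $i=0$ boundary term contributes only $o(1/n)$ since \eqref{eq:B2} forces $\tr(\A^4) = n+o(1)$. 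The remaining piece $\sum_{i=0}^{p-1}\Var(\tr(\bfQ_i))$ is the heart of the matter: it must be shown to vanish, and this is precisely what Lemma \ref{lem:yaskov} delivers under Assumption \eqref{eq:B2}, as flagged in Remark \ref{rem:3.2}(6). Once this variance control is inserted, $\wt\sigma_n^2 - \sigma_n^2 = o(1)$, and since the earlier steps bound $\sigma_n^2$ away from zero in the limit, the desired equivalence follows. The main obstacle is therefore the variance estimate; everything else is routine bookkeeping.
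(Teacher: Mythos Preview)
Your proposal is correct and follows essentially the same route as the paper: both derive the per-term ratio bounds $1/(n-i)\le \tr(\A^4\E[\bfP_i])/(\tr(\A^2\E[\bfP_i]))^2\le C^4/(n-i)$ (your Cauchy--Schwarz with weights $w_{i,k}=\E[p_{i,kk}]$ is the same inequality as the paper's Jensen applied to the auxiliary random variable $\eta_\ell$), reduce the harmonic sum via $H_n-H_{n-p}$, and obtain $\wt\sigma_n^2-\sigma_n^2=o(1)$ by expanding Lemma~\ref{lem:secondmoment} and invoking the variance bound \eqref{eq:dfhdfdfd} from Lemma~\ref{lem:yaskov}. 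Your explicit tracking of $n^2\beta_4-1=2(n-1)/(n+2)$ and $n^2\beta_{2,2}=n/(n+2)$ is slightly more detailed than the paper's ``$n^2\beta_4\to 3$, $n^2\beta_{2,2}\to 1$'', but the argument is the same.
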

\begin{proof} 
From Lemma~\ref{lem:secondmoment}, Lemma~\ref{lem:orderSj}, \eqref{eq:dfhdfdfd} and the facts that $n^2 \beta_4 \to 3$ and $n^2 \beta_{2,2} \to 1$, we get
\begin{align*}
\wt \sigma_n^2&=  \sum_{i=0}^{p-1} \Bigg( (n^2\beta_4-1) \Big(\E[\tr({\bfLambda}_{\bfQ_i}^2)] -\frac{1}{n-1}\Big)  +{\E[\tr({\bfLambda}_{\bfQ_i}^2)](n^2 \beta_4-1)\over n-1}
 +n^2\beta_{2,2} \Var(\tr(\bfQ_i))\Bigg)\\
&=  -2 \frac{p}{n} +2 \sum_{i=0}^{p-1} \frac{\tr(\A^4 \E[\bfP_i])}{(\tr(\A^2 \E[\bfP_i]))^2} +o(1)\,, \qquad \nto\,.
\end{align*}
The fact that $\sum_{i=0}^{p-1} \frac{1}{n-i} \sim -\log(1-p/n)$ follows from asymptotic properties of the harmonic series, more precisely, by using that $\sum_{k=1}^n 1/k -\log n$ tends to the Euler--Mascheroni constant.
The last limit in \eqref{eq:var1} follows from $p/n\to \gamma\in (0,1)$. To complete the proof of the lemma, it suffices to show  
$$\frac{1}{n-i} \le \frac{\tr(\A^4 \E[\bfP_i])}{(\tr(\A^2 \E[\bfP_i]))^2}\le \frac{C^4}{n-i} .$$
For $\ell\in\{2,4\}$ and conditionally on $\mathcal{F}_i$, define the random variables $\eta_{\ell}$ through
$$\P(\eta_{\ell}=A_{kk})=\frac{p_{i,kk}}{n-i}\,, \qquad k=1,\ldots,n\,,$$
where we recall that $\tr \bfP_i=n-i$. By Jensen's inequality, we deduce
$$ (n-i) \tr(\A^4 \E[\bfP_i]) = (n-i)^2 \E[\eta_{\ell}^4] \ge (n-i)^2 \big(\E[\eta_{\ell}^2]\big)^2 = (\tr(\A^2 \E[\bfP_i]))^2\,.$$
Together with $C^{-1}\le A_{nn}^2 \le A_{11}^2\le C$ this yields the desired claim.
\end{proof}
An immediate consequence of Lemma~\ref{lem:sigma} is $\wt\mu_n-\mu_n\to 0$. Next, we observe that only the expectations of the diagonal elements of $\bfP_i$ are needed in \eqref{defmu} and \eqref{defsigma}. Indeed, we have
$$\tr(\A^2 \E[\bfP_i])=\sum_{k=1}^n A_{kk}^2 \E[p_{i,kk}] \quad \text{ and } \quad \tr(\A^4 \E[\bfP_i])=\sum_{k=1}^n A_{kk}^4 \E[p_{i,kk}]\,.$$
Thus our final goal is to find $t_{i,k}(\A):= \E[p_{i,kk}]$ for $1\le i\le p-1$ and $1\le k\le n$.

\begin{lemma}\label{lem:tik}
For $1\le i\le p-1$ and $1\le k\le n$, let $\bfw_{i1}, \ldots, \bfw_{in}$ be i.i.d.\ $i$-dimensional random vectors whose components are independent standard normal random variables. Then it holds 
\begin{equation}\label{eq:tikdag}
t_{i,k}(\A)= \E\left[\frac{1}{1+ A_{kk}^2 \bfw_{ik}^{\top} \big(\sum_{\ell=1; \ell \neq k}^n A_{\ell\ell}^2 \bfw_{i\ell} \bfw_{i\ell}^{\top}\big)^{-1}\bfw_{ik}}\right]\,.
\end{equation}
\end{lemma}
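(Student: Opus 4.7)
The plan is to show the identity pointwise (i.e., almost surely) for the random variable $p_{i,kk}$ and then take expectations. The key tool will be the Sherman--Morrison rank-one update formula.

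First, I would unpack $p_{i,kk}$ from the representation \eqref{eq:repPi}. Since $\bfA$ is diagonal with entries $A_{jj}$, we have
\[
p_{i,kk} \;=\; \bfe_k^{\top}\bfP_i \bfe_k \;=\; 1 - A_{kk}^2\,(\bfN_{(i)}\bfe_k)^{\top}\bigl(\bfN_{(i)}\bfA^2\bfN_{(i)}^{\top}\bigr)^{-1}(\bfN_{(i)}\bfe_k).
\]
Now set $\bfw_{i\ell}:=\bfN_{(i)}\bfe_{\ell}$, the $\ell$-th column of $\bfN_{(i)}$; by construction $\bfw_{i1},\dots,\bfw_{in}$ are i.i.d.\ $i$-dimensional standard normal vectors, matching the setting of the lemma. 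Writing out the matrix product column-wise, one has
\[
\bfN_{(i)}\bfA^2\bfN_{(i)}^{\top} \;=\; \sum_{\ell=1}^n A_{\ell\ell}^2 \,\bfw_{i\ell}\bfw_{i\ell}^{\top} \;=\; \bfM_k + A_{kk}^2\,\bfw_{ik}\bfw_{ik}^{\top},
\qquad
\bfM_k := \sum_{\ell=1;\,\ell\neq k}^n A_{\ell\ell}^2\,\bfw_{i\ell}\bfw_{i\ell}^{\top}.
\]

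Next I would argue almost-sure invertibility of $\bfM_k$. Since $i\le p-1$ and $p/n\to\gamma\in(0,1)$ we certainly have $n-1\ge i$, so the $n-1$ i.i.d.\ Gaussian vectors $\{\bfw_{i\ell}\}_{\ell\neq k}$ in $\R^i$ almost surely span $\R^i$; consequently $\bfM_k$ is a.s.\ positive definite (and in particular invertible). Apply the Sherman--Morrison formula with $\bfv=A_{kk}\bfw_{ik}$:
\[
\bfv^{\top}(\bfM_k+\bfv\bfv^{\top})^{-1}\bfv \;=\; \frac{\bfv^{\top}\bfM_k^{-1}\bfv}{1+\bfv^{\top}\bfM_k^{-1}\bfv},
\]
which, after substituting back, gives
\[
p_{i,kk} \;=\; 1 - \frac{A_{kk}^2\,\bfw_{ik}^{\top}\bfM_k^{-1}\bfw_{ik}}{1+A_{kk}^2\,\bfw_{ik}^{\top}\bfM_k^{-1}\bfw_{ik}} \;=\; \frac{1}{1+A_{kk}^2\,\bfw_{ik}^{\top}\bfM_k^{-1}\bfw_{ik}}
\]
almost surely. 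Taking expectations yields \eqref{eq:tikdag}.

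I do not expect any genuine obstacle: the argument is purely algebraic via Sherman--Morrison once the notational identification of the columns of $\bfN_{(i)}$ with the vectors $\bfw_{i\ell}$ is made. The only mildly subtle point is verifying the invertibility of $\bfM_k$ so that the application of Sherman--Morrison is legitimate, but this is immediate from the Gaussian law of the $\bfw_{i\ell}$ together with the dimension count $n-1\ge i$.
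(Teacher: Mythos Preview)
Your proof is correct and follows essentially the same approach as the paper's: both identify the columns $\bfw_{i\ell}$ of $\bfN_{(i)}$, split off the rank-one term $A_{kk}^2\bfw_{ik}\bfw_{ik}^{\top}$ from $\bfN_{(i)}\bfA^2\bfN_{(i)}^{\top}$, and apply the Sherman--Morrison formula to obtain the pointwise identity for $p_{i,kk}$ before taking expectations. Your additional remark on the almost-sure invertibility of $\bfM_k$ is a detail the paper leaves implicit.
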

\begin{proof}
Denote the $k$-th column of $\bfN_{(i)}$ by $\bfw_{ik}$ and write $\bfN_{(i,k)}$ for the matrix obtained from $\bfN_{(i)}$ by removing its $k$-th column. Let $\bfA_{(k)}$ be the matrix $\A$ without its $k$-th row and column.
Using \eqref{eq:repPi} and the Sherman-Morrison formula \cite{SM50}, we get
\begin{equation}\label{eq:pikk}
\begin{split}
p_{i,kk}
&= 1- A_{kk}^2 \bfw_{ik}^{\top} \big(\bfN_{(i)}\bfA^2 \bfN_{(i)}^{\top} \big)^{-1}\bfw_{ik}\\
&= 1- A_{kk}^2 \bfw_{ik}^{\top} \big(\bfN_{(i,k)}\bfA_{(k)}^2 \bfN_{(i,k)}^{\top} +A_{kk}^2 \bfw_{ik} \bfw_{ik}^{\top}\big)^{-1}\bfw_{ik}\\
&= \frac{1}{1+ A_{kk}^2 \bfw_{ik}^{\top} \big(\bfN_{(i,k)}\bfA_{(k)}^2 \bfN_{(i,k)}^{\top} \big)^{-1}\bfw_{ik}}\,.
\end{split}
\end{equation}
Noting that the entries of $\bfN_{(i)}$ are independent standard normal random variables finishes the proof.
\end{proof}
\begin{remark}\label{rem:tik}{\em
If $\A=\bfI_n$, one has $t_{i,k}(\bfI_n)=(n-i)/n$ since $\tr \bfP_i=n-i$ and $p_{i,11},\ldots, p_{i,nn}$ are identically distributed. We stress that the formula for $t_{i,k}(\A)$ in \eqref{eq:tikdag} is valid for diagonal matrices $\A$ and observe
\begin{equation}\label{eq:tricks}
t_{i,k}(\A)= t_{i,k}(c \,\A)\,, \qquad c\in \R\backslash\{0\}\,.
\end{equation} 
}\end{remark}
For $s\in\{2,4\}$ we get
$\tr(\A^s \E[\bfP_i])=\sum_{k=1}^n A_{kk}^s t_{i,k}(\A)$
and therefore, we may write \eqref{defmu} and \eqref{defsigma} as follows:
\begin{align*}
\mu_n&=p \log \frac{\tr(\A^{\top} \A)}{n} -\frac{\sigma_n^2}{2}+ \sum_{i=1}^{p-1} \log \frac{\sum_{k=1}^n A_{kk}^2 t_{i,k}(\A)}{n}\,,\\
\sigma_n^2&=-2\frac{p}{n} + 2 \sum_{i=1}^{p-1} \frac{\sum_{k=1}^n A_{kk}^4 t_{i,k}(\A)}{(\sum_{k=1}^n A_{kk}^2 t_{i,k}(\A))^2}\,.
\end{align*}

So far we have assumed that $\A$ is a diagonal matrix with positive diagonal elements $A_{jj}=\sqrt{\lambda_j(\A^{\top} \A)}$, $j=1,\ldots,n$ and $\tr(\A^{\top} \A)=n$. For the general case we need to replace $t_{i,k}(\A)$ with $t_{i,k}(\wt\A)$, where 
$$\wt\A:= \sqrt{\frac{n}{\tr(\A^{\top}\A)}} \, {\bfLambda}_{\A^{\top}\A}^{1/2}\,,$$
with $k$-th diagonal element of $\wt\A^2$ given by
$\frac{n}{\tr(\A^{\top}\A)} \lambda_k(\A^{\top}\A)$. In view of \eqref{eq:tricks}, we have $t_{i,k}(\wt\A)=t_{i,k}\big( {\bfLambda}_{\A^{\top}\A}^{1/2} \big)$. To unify notation, we define for (not necessarily diagonal) matrices $\A$, 
\begin{equation}\label{eq:tikdag1}
t_{i,k}(\A)= \E\left[\frac{1}{1+ \lambda_k(\A^{\top}\A) \bfw_{ik}^{\top} \big(\sum_{\ell=1; \ell \neq k}^n \lambda_{\ell}(\A^{\top}\A) \bfw_{i\ell} \bfw_{i\ell}^{\top}\big)^{-1}\bfw_{ik}}\right]\,,
\end{equation}
which coincides with \eqref{eq:tikdag} in case $\A$ is a diagonal matrix. 

The above considerations establish that the sequences $\wt\mu_n$ and $\wt\sigma_n^2$ in \eqref{eq:mainintermediate} can be replaced by
\begin{align*}
\mu_n&=p \log \frac{\tr(\A^{\top} \A)}{n} -\frac{\sigma_n^2}{2}+ \sum_{i=1}^{p-1} \log \frac{\sum_{k=1}^n \frac{n}{\tr(\A^{\top}\A)}\lambda_k(\A^{\top}\A) t_{i,k}(\A)}{n}\\
&= \log \tr(\A^{\top}\A) -p \log n -\frac{\sigma_n^2}{2}+ \sum_{i=1}^{p-1} \log \bigg(\sum_{k=1}^n \lambda_k(\A^{\top}\A) t_{i,k}(\A)\bigg)\,,\\
\sigma_n^2&=-2\frac{p}{n} + 2 \sum_{i=1}^{p-1} \frac{\sum_{k=1}^n \lambda_k^2(\A^{\top}\A) t_{i,k}(\A)}{(\sum_{k=1}^n \lambda_k (\A^{\top}\A) t_{i,k}(\A))^2}\,,
\end{align*}
respectively, which completes the proof of Theorem~\ref{thm:main}.

\subsection{Proof of \eqref{sumY_i}}\label{sec:sumY_i}

By Markov's inequality, one has for $\vep>0$,
\begin{equation}\label{eq:boundY}
\P\Big(\Big|\sum_{i=0}^{p-1} \wt Y_{i+1}\Big|>\vep\Big)\le \vep^{-2} \E\Big[\Big(\sum_{i=0}^{p-1} \wt Y_{i+1}\Big)^2 \Big]\,.
\end{equation}
If $j\neq i$ one can show by conditioning on $\mathcal{F}_{\max(i,j)}$ that 
$\E[\wt Y_{i+1} \wt Y_{j+1}]=0$. Therefore, one gets
\begin{equation*}
\begin{split}
\E\Big[\Big(\sum_{i=0}^{p-1} \wt Y_{i+1}\Big)^2 \Big]&= \sum_{i=0}^{p-1} \E[\wt Y_{i+1}^2]= \frac14 \sum_{i=0}^{p-1} \E\Big[(\wt Z_{i+1}^2-\E[\wt Z_{i+1}^2 | \mathcal{F}_i])^2\Big]\\
&\le \frac12 \sum_{i=0}^{p-1}  \E[\wt Z_{i+1}^4] 
+ \frac12 \sum_{i=0}^{p-1} \E[(\E[\wt Z_{i+1}^2 | \mathcal{F}_i])^2] \,,
\end{split}
\end{equation*}
where the first term in the last line is $o(1)$ by Proposition \ref{prop:useful}.
Using Lemma~\ref{lem:secondmoment}, Lemma~\ref{lem:orderSj} and \eqref{lem:moment24}, we see that, uniformly in $i$,  
\begin{align*}
\E[\wt Z_{i+1}^2 | \mathcal{F}_i] -n^2\beta_{2,2} (\tr(\bfQ_i)-1)^2&=
(n^2\beta_4-1) \Big(\tr({\bfLambda}_{\bfQ_i}^2) -\frac{1}{n-1}\Big) +{\tr({\bfLambda}_{\bfQ_i}^2)(n^2 \beta_4-1)\over n-1} =O(n^{-1})\,.
\end{align*}
Therefore we have by \eqref{eq:dfhdfdfd} and since $\tr(\bfQ_i)+1$ is uniformly bounded by some constant $c$,
\begin{align*}
\sum_{i=0}^{p-1}\E[(\E[\wt Z_{i+1}^2 | \mathcal{F}_i])^2]&\le O(n^{-1})+2 (n^2\beta_{2,2})^2 \sum_{i=0}^{p-1} \E\big[\big((\tr(\bfQ_i))^2-1\big)^2\big]\\
&\le O(n^{-1})+2  \sum_{i=0}^{p-1}\E\big[(\tr(\bfQ_i)-1)^2(\tr(\bfQ_i)+1)^2\big]\\
&\le O(n^{-1})+2 c^2 \sum_{i=0}^{p-1}\E\big[(\tr(\bfQ_i)-1)^2\big]=o(1)\,.
\end{align*}
We conclude 
$$\lim_{\nto} \E\Big[\Big(\sum_{i=0}^{p-1} \wt Y_{i+1}\Big)^2 \Big] =0\,.$$
In view of \eqref{eq:boundY}, we have proved \eqref{sumY_i}.

\subsection{Proof of \eqref{sumZ}} \label{sec:sumZ}
Set $\bar{Z}_{i+1} := \wt Z_{i+1}-\E[\wt Z_{i+1} | \mathcal{F}_i]=\wt Z_{i+1}- (\tr \bfQ_i -1)$. By Lemma \ref{lem:yaskov}, we have $\wt\sigma_n^{-1} \sum_{i=0}^{p-1}(\tr \bfQ_i -1)=o_{\P}(1)$, as $\nto$, and thus it suffices to show  
\begin{align}\label{sumZ11}
\frac{1}{\wt\sigma_n} \sum_{i=0}^{p-1} \bar{Z}_{i+1} &\cid N(0,1)\,. 
\end{align}
To this end, we will use the following CLT for martingale differences.
\begin{lemma}[e.g. Hall and Heyde \cite{hall:heyde:1980}, Theorem~3.2] \label{lem:martingaleclt}
Let $\{S_{ni},\mathcal{F}_{ni}, 1\le i\le k_n, n\ge 1\}$ be a zero-mean, square integrable martingale array with differences $Z_{ni}$. Suppose that $\E[\max_i Z_{ni}^2]$ is bounded in $n$ and that
 \begin{equation*}
\max_i |Z_{ni}|\cip 0 \quad \text{ and } \quad \sum_i Z_{ni}^2 \cip 1\,.
\end{equation*}
Then we have $S_{nk_n}\cid N(0,1)$ as $n\to\infty$.
\end{lemma}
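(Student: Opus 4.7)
The plan is to prove this martingale central limit theorem via the characteristic-function method due to McLeish, which is the approach adopted by Hall and Heyde in their monograph. The goal is to establish $\varphi_n(t):=\E[\exp(itS_{nk_n})]\to e^{-t^2/2}$ for every $t\in\R$, which yields $S_{nk_n}\cid N(0,1)$ by L\'evy's continuity theorem.

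The key device is the complex-valued auxiliary product $T_n(t):=\prod_{i=1}^{k_n}(1+itZ_{ni})$. Using the martingale-difference property $\E[Z_{n,i+1}\mid\mathcal{F}_{n,i}]=0$, a brief induction shows that $M_{nj}:=\prod_{i=1}^j(1+itZ_{ni})$ is itself a complex-valued martingale with $M_{n0}=1$, so in particular $\E[T_n(t)]=1$ for every $n$. One then decomposes $\exp(itS_{nk_n})=T_n(t)\cdot R_n(t)$, where $R_n(t):=\exp\bigl(\sum_i[itZ_{ni}-\log(1+itZ_{ni})]\bigr)$. The Taylor expansion $itZ_{ni}-\log(1+itZ_{ni})=-t^2Z_{ni}^2/2+O(|tZ_{ni}|^3)$ is valid once $\max_i|tZ_{ni}|\le 1/2$, which holds eventually by the hypothesis $\max_i|Z_{ni}|\cip 0$. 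Combined with $\sum_i Z_{ni}^2\cip 1$, this yields $R_n(t)\cip e^{-t^2/2}$; in particular the cubic remainder is bounded in absolute value by $|t|^3\max_i|Z_{ni}|\cdot\sum_i Z_{ni}^2$, which vanishes in probability.

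The main obstacle is to upgrade this in-probability statement to convergence of expectations, i.e.\ to prove $\varphi_n(t)=\E[T_n(t)R_n(t)]\to e^{-t^2/2}\E[T_n(t)]=e^{-t^2/2}$; the subtlety is that the random variables $T_n(t)$ are not bounded in absolute value. The estimate $|T_n(t)|^2=\prod_i(1+t^2Z_{ni}^2)\le\exp(t^2\sum_i Z_{ni}^2)$ delivers tightness, but uniform integrability of $T_n(t)$ requires an additional argument that uses the third hypothesis $\sup_n\E[\max_i Z_{ni}^2]<\infty$. The standard resolution is an optional-stopping truncation: for a large constant $K$, define $\tau_n:=\min\{j:\sum_{i\le j}Z_{ni}^2>K\}\wedge k_n$; the stopped product $M_{n\tau_n}$ is still a martingale of expectation one, and the small overshoot $\sum_{i\le\tau_n}Z_{ni}^2-K$ is dominated pathwise by $\max_i Z_{ni}^2$, so the moment control allows one to replace $T_n(t)$ by $M_{n\tau_n}$ at a cost that is uniform in $n$. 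This yields $\sup_n\E[|T_n(t)|^2]<\infty$, hence the uniform integrability needed to pass $R_n(t)\cip e^{-t^2/2}$ through the expectation. Combining the three ingredients, $\varphi_n(t)\to e^{-t^2/2}$ as required.
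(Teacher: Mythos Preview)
The paper does not prove this lemma at all; it is quoted verbatim as Theorem~3.2 of Hall and Heyde and used as a black box in Section~\ref{sec:sumZ}. Your sketch is precisely the McLeish characteristic-function argument on which Hall and Heyde's proof is based, so there is nothing to compare. One minor caution: the claim that the stopping-time truncation yields $\sup_n\E[|T_n(t)|^2]<\infty$ is slightly too strong as stated, since bounding $\exp\bigl(t^2\max_i Z_{ni}^2\bigr)$ in $L^1$ would require more than $\sup_n\E[\max_i Z_{ni}^2]<\infty$; the standard argument instead shows uniform integrability of $(T_n(t))_n$ directly by combining the event $\{\tau_n<k_n\}$ (whose probability is small for large $K$ by tightness of $\sum_i Z_{ni}^2$) with the pathwise bound on $\{\tau_n=k_n\}$, and this is enough because $|R_n(t)|\le 1$.
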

In view of $\E[\bar{Z}_{i+1} | \mathcal{F}_i]=0$, we observe that $(\bar{Z}_{i+1})_i$ is a martingale difference sequence with respect to the filtration $(\mathcal{F}_i)$.
We apply Lemma~\ref{lem:martingaleclt} to the martingale differences $\wt\sigma_n^{-1} \bar{Z}_{i+1}$.
From \eqref{eq:maxZ} and \eqref{eq:fdfdd1}, we have 
$$\max_{i=0,\ldots,p-1} | \wt\sigma_n^{-1} \,\bar{Z}_{i+1}| \le \wt\sigma_n^{-1} \Big( \max_{i=0,\ldots,p-1} | \wt Z_{i+1}|+ \max_{i=0,\ldots,p-1} | \tr \bfQ_i -1|\Big)\cip 0\,, \qquad \nto\,.$$
Next, we see that
\begin{equation*}
\wt\sigma_n^{-2} \E\Big[\max_{i=0,\ldots,p-1} \bar{Z}_{i+1}^2\Big]\le 2\,\wt\sigma_n^{-2} \sum_{i=0}^{p-1} \Big(\E[\wt Z_{i+1}^2] +\Var\big(\tr(\bfQ_i)\big) \Big)=2+o(1)\,,
\end{equation*}
where the last equality follows from the definition of $\wt\sigma_n^2$ and \eqref{eq:dfhdfdfd}. 

Using Markov's inequality and \eqref{eq:dfhdfdfd}, it can be checked that $\sum_{i=0}^{p-1}\big( \bar{Z}_{i+1}^2-\wt Z_{i+1}^2\big) \cip 0$. 
Due to $\wt\sigma_n^{-2} \sum_{i=0}^{p-1}\E[\wt Z_{i+1}^2]=1$, the condition $\wt\sigma_n^{-2} \sum_{i=0}^{p-1}\bar{Z}_{i+1}^2\cip 1$ is therefore implied by
\begin{equation}\label{eq:1212}
 \sum_{i=0}^{p-1}(\wt Z_{i+1}^2-\E[\wt Z_{i+1}^2\,| \mathcal{F}_i])\cip 0 \,,\qquad \nto\,,
\end{equation}
and 
\begin{equation}\label{eq:reess}
 \sum_{i=0}^{p-1}(\E[\wt Z_{i+1}^2\,| \mathcal{F}_i]-\E[\wt Z_{i+1}^2]) \cip 0 \,,\qquad \nto\,.
\end{equation}
Observe that \eqref{eq:1212} is equivalent to \eqref{sumY_i}. Hence, it remains to show \eqref{eq:reess}. To this end, recall that in Lemma \ref{lem:secondmoment} and its proof we obtained
\begin{align*}
\sum_{i=0}^{p-1}\big(\E[\wt Z_{i+1}^2\,| \mathcal{F}_i]-\E[\wt Z_{i+1}^2]\big) &=
\Big(n^2\beta_4-1+{(n^2 \beta_4-1)\over n-1}\Big) \sum_{i=0}^{p-1} \Big(\tr({\bfLambda}_{\bfQ_i}^2)-\E[\tr({\bfLambda}_{\bfQ_i}^2)]\Big) \\
&\quad +n^2\beta_{2,2} \sum_{i=0}^{p-1}\Big( (\tr(\bfQ_i))^2-\E\big[ (\tr(\bfQ_i))^2\big]\Big)=:S^{(1)}+S^{(2)}\,.
\end{align*}
Using $n^2 \beta_4\to 3$, $n^2 \beta_{2,2}\to 1$ and Lemma~\ref{lem:yaskov}, we get
\begin{align*}
S^{(1)}&\sim 2 \sum_{i=0}^{p-1} \Big(\tr({\bfLambda}_{\bfQ_i}^2)-\E[\tr({\bfLambda}_{\bfQ_i}^2)]\Big)\cip 0 \quad \text{ and } \quad S^{(2)}\cip 0\,, \qquad \nto\,.
\end{align*}
Thus, we have verified the conditions of Lemma~\ref{lem:martingaleclt} which now yields \eqref{sumZ} and finishes the proof.

\subsection{Proof of \eqref{sumR_i}}\label{sec:sumR_i}

Set $\delta=1/2$ and define the event $E_n(\delta)=\{\max_{i=0,\ldots,p-1} |\wt Z_{i+1}|\le \delta \}$. For any $\vep>0$, it follows that
\begin{equation*}
\P\left( \Big|\sum_{i=0}^{p-1} \Omega_{i+1}\Big| >\vep\right)\le \P\left( \Big|\sum_{i=0}^{p-1} \Omega_{i+1}\Big| \1_{E_n(\delta)}>\vep\right)+\P\left(\max_{i=0,\ldots,p-1} |\wt Z_{i+1}|>\delta\right)\,.
\end{equation*}
The second term on the right-hand side tends to zero by virtue of \eqref{eq:maxZ}. 
From \eqref{eq:Ri+1}, recall that 
\begin{equation*}
\Omega_{i+1}=\frac{1}{3} \Big(\frac{\wt Z_{i+1}}{1+\theta \wt Z_{i+1}} \Big)^3 \quad \text{ for some } \theta=\theta(\wt Z_{i+1})\in (0,1)\,.
\end{equation*}
On the event $E_n(\delta)$ we have $1+\theta \wt Z_{i+1}\in (1-\delta,1+\delta)$ and therefore
\begin{equation*}
\begin{split}
\P\left( \Big|\sum_{i=0}^{p-1} \Omega_{i+1}\Big| \1_{E_n(\delta)}>\vep\right)
&\le \frac{1}{3\vep} \sum_{i=0}^{p-1}  \E\Big[\Big|\frac{\wt Z_{i+1}}{1+\theta \wt Z_{i+1}}\Big|^3 \1_{E_n(\delta)}\Big]\\
&\le \frac{1}{3\vep} \sum_{i=0}^{p-1} (1-\delta)^{-3}  \E\Big[|\wt Z_{i+1}|^3 \1_{E_n(\delta)}\Big]\\
&\le \frac{1}{3\vep (1-\delta)^{3}} \sum_{i=0}^{p-1}   \Big(\E\Big[|\wt Z_{i+1}|^4\Big]\Big)^{3/4}\\
&\le \frac{1}{3\vep (1-\delta)^{3}} p \big(c_0n^{-2}\big)^{3/4}\to 0\,,\qquad \nto\,,
\end{split}
\end{equation*}
where Hölder's inequality with $p=4/3$ and $q=4$ was used for the third inequality and Proposition~\ref{prop:useful} for the last.

\subsection{Proof of \eqref{sumconst}}\label{sec:sumconst}

In view of \eqref{eq:reess}, equation \eqref{sumconst} follows from
\begin{equation*}
\lim_{\nto}\Big[ \sum_{i=0}^{p-1} \tfrac{1}{2}\E[\wt Z_{i+1}^2] -c_n+\wt\mu_n\Big]=\lim_{\nto}\big[ \wt\sigma_n^2/2-c_n+\wt\mu_n\big]=0\,.
\end{equation*}

\appendix

\section{A technical lemma}\setcounter{equation}{0}

{Throughout this section we will assume the conditions of Theorem~\ref{thm:main} and use the notation from Section~\ref{sec:proofmain}. As in Section~\ref{sec:proofmain}, we will assume that $\A$ is a diagonal matrix with positive diagonal elements $A_{jj}=\sqrt{\lambda_j(\A^{\top} \A)}$, $j=1,\ldots,n$ and $\tr(\A^{\top} \A)=n$. By Assumption (B), there exists a constant $C\ge 1$ not depending on $n$ such that $C^{-1}\le A_{nn}^2 \le A_{11}^2\le C$.}

\begin{lemma}\label{lem:yaskov}
It holds, as $\nto$, 
\begin{align}
&\sum_{i=0}^{p-1} \Big(\tr({\bfLambda}_{\bfQ_i}^2)-\E[\tr({\bfLambda}_{\bfQ_i}^2)]\Big) \cip 0\,, \label{eq:fdfdd}\\
&\sum_{i=0}^{p-1} \big(\tr(\bfQ_i)-1\big) \cip 0 \qquad \text{ and } \qquad \max_{i=0,\ldots,p-1} \big|\tr(\bfQ_i)-1\big| \cip 0\,,\label{eq:fdfdd1}\\
& \sum_{i=0}^{p-1}\Big( (\tr(\bfQ_i))^2-\E\big[ (\tr(\bfQ_i))^2\big]\Big)\cip 0\,,\label{eq:lastdirt}
\end{align}
where the matrix $\bfQ_i$ is defined in \eqref{eq:Q} and ${\bfLambda}_{\bfQ_i}$ is defined by \eqref{eq:SpectralDecomposition}. Moreover, it holds for $\vep>0$ that
\begin{equation}\label{eq:dfhdfdfd}
\P\Big(\max_{i=0,\ldots,p-1} \big|\tr(\bfQ_i)-1\big|>\vep\Big)\le \frac{1}{\vep^2}  \sum_{i=1}^{p-1}\Var\big(\tr(\bfQ_i)\big) = O(1/n)\,, \qquad \nto\,.
\end{equation}
\end{lemma}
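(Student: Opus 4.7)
The plan is to reduce all four convergences to the quantitative variance estimate in~\eqref{eq:dfhdfdfd}, namely $\sum_{i=1}^{p-1}\Var(\tr(\bfQ_i))=O(1/n)$. Under the standing normalization $\tr(\bfA^2)=n$, decompose $\bfA^2=\bfI_n+\bfM$ with $\tr(\bfM)=0$, so that by~\eqref{eq:B2}
\[
\|\bfM\|_F^2=\sum_{k=1}^n\big(\lambda_k(\bfA\bfA^\top)-1\big)^2\longrightarrow 0.
\]
Since $\tr(\bfP_i)=n-i$ is deterministic, $\tr(\bfA^2\bfP_i)=(n-i)+\tr(\bfM\bfP_i)$ and $T_i=(n-i)+\E[\tr(\bfM\bfP_i)]\asymp n-i$, giving $\Var(\tr(\bfQ_i))=T_i^{-2}\Var(\tr(\bfM\bfP_i))$. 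The whole task reduces to bounding the variance of the linear functional $\tr(\bfM\bfP_i)$ of the random projection.

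For $\Var(\tr(\bfM\bfP_i))$, the target bound is $\Var(\tr(\bfM\bfP_i))\le C\|\bfM\|_F^2$ uniformly in $i\le p-1$. The argument I would use writes $\tr(\bfM\bfP_i)$ as a telescoping sum of Doob martingale differences along the $i$ rows of $\bfN_{(i)}$ and applies the Sherman--Morrison update already featured in~\eqref{eq:pikk}: each increment is a quadratic form in a single independent Gaussian row whose second moment is controlled via~\eqref{lem:moment24}, a Hanson--Wright-type bound, and uniform control of the spectrum of $(\bfN_{(j)}\bfA^2\bfN_{(j)}^\top)^{-1}$ deduced from~\eqref{eq:B1} and standard Wishart concentration. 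Dividing by $T_i^2\asymp(n-i)^2$ and summing over $i=1,\dots,p-1$ then yields
\[
\sum_{i=1}^{p-1}\Var(\tr(\bfQ_i))=O\left(\frac{\|\bfM\|_F^2}{n}\right)=O(1/n),
\]
with~\eqref{eq:B2} invoked in the final step. This is the inequality alluded to in Remark~\ref{rem:3.2}(6): improving it to $o(1/n)$ without recourse to~\eqref{eq:B2} would render that assumption dispensable.

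Once~\eqref{eq:dfhdfdfd} is in hand, the remaining assertions follow in quick succession. The maximal part of~\eqref{eq:fdfdd1} and the displayed bound in~\eqref{eq:dfhdfdfd} itself drop out of the union bound and Chebyshev applied to each $\tr(\bfQ_i)-1$. For the sum $\sum_i(\tr(\bfQ_i)-1)\cip 0$, I would write $\tr(\bfQ_i)-1=T_i^{-1}\big(\tr(\bfM\bfP_i)-\E[\tr(\bfM\bfP_i)]\big)$ and apply Cauchy--Schwarz followed by Jensen to obtain
\[
\E\bigg|\sum_{i=1}^{p-1}(\tr(\bfQ_i)-1)\bigg|\le\sqrt{\sum_{i=1}^{p-1}T_i^{-2}}\;\sqrt{\sum_{i=1}^{p-1}\Var(\tr(\bfM\bfP_i))}=O(\|\bfM\|_F)=o(1),
\]
since $\sum_{i=1}^{p-1}(n-i)^{-2}=O(1/n)$ while $\sum_{i=1}^{p-1}\Var(\tr(\bfM\bfP_i))=O(p\,\|\bfM\|_F^2)$. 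The claim~\eqref{eq:fdfdd} for $\tr({\bfLambda}_{\bfQ_i}^2)=T_i^{-2}\tr(\bfA^4\bfP_i)$ (Lemma~\ref{lem:orderSj}) is obtained verbatim with $\bfA^2$ replaced by $\bfA^4$ and $\bfM$ by $\widetilde\bfM:=\bfA^4-n^{-1}\tr(\bfA^4)\bfI_n$, whose Frobenius norm is controlled through~\eqref{eq:B1}--\eqref{eq:B2}. Finally,~\eqref{eq:lastdirt} follows by factorizing $(\tr\bfQ_i)^2-1=(\tr\bfQ_i-1)(\tr\bfQ_i+1)$ together with the uniform bound $\tr(\bfQ_i)\le C^2$ implied by~\eqref{eq:orderTi}, reducing it to the preceding step.

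The principal obstacle will be the variance bound $\Var(\tr(\bfM\bfP_i))\le C\|\bfM\|_F^2$: the projection $\bfP_i$ is only orthogonal after the $\bfA$-change of coordinates, so the Sherman--Morrison manipulations must carry $\bfA^2$ inside the matrix inverses while simultaneously exploiting $\tr(\bfM)=0$ and the off-diagonal structure of $\bfM$. The standard calculations for Grassmannian-uniform projections (which correspond to $\bfA=\bfI_n$) are not directly available, and sharpening the bound by an extra $1/n$ factor---which would free the argument from~\eqref{eq:B2}---does not appear to follow from the present martingale-plus-Sherman--Morrison approach.
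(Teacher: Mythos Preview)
Your reduction is the same as the paper's: with $a_k=A_{kk}^2-1$ (your $\bfM$ is exactly $\diag(a_1,\dots,a_n)$), everything hinges on controlling the fluctuations of $\sum_k a_k\,p_{i,kk}$ and its $\bfA^4$-analogue, using crucially that $\sum_k a_k^2=\|\bfM\|_F^2\to 0$ by~\eqref{eq:B2}. The downstream deductions for~\eqref{eq:fdfdd1},~\eqref{eq:dfhdfdfd} and~\eqref{eq:lastdirt} are correct; in particular your $L^1$ triangle-inequality/Cauchy--Schwarz argument for $\sum_i(\tr\bfQ_i-1)$ in fact bounds $\sum_i\E|\tr\bfQ_i-1|$, which is exactly what the factorization $(\tr\bfQ_i)^2-1=(\tr\bfQ_i-1)(\tr\bfQ_i+1)$ needs for~\eqref{eq:lastdirt} (the paper instead expands algebraically and reuses the covariance bounds).

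Where you diverge from the paper is in the proof of the core estimate. The paper does \emph{not} run a martingale on the rows of $\bfN_{(i)}$ to bound $\Var(\tr\bfM\bfP_i)$ directly. Instead it first establishes the entrywise bound $\max_{i,k}\Var(p_{i,kk})=O(1/n)$ (equation~\eqref{eq:nastypart}): write $p_{i,kk}=(1+A_{kk}^2\bfw_{ik}^\top\bfM_{ik}\bfw_{ik})^{-1}$ via Sherman--Morrison on the $k$-th \emph{column}, then bound its variance by a Hanson--Wright term plus $\Var(\tr\bfM_{ik})$, and control the latter by a martingale over the remaining \emph{columns} (Burkholder plus a second Sherman--Morrison). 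With~\eqref{eq:nastypart} in hand, Cauchy--Schwarz on the full covariance matrix yields $\Var(\sum_k a_k p_{i,kk})\le\|\bfM\|_F^2\big(\sum_{k,\ell}\Cov^2(p_{i,kk},p_{i,\ell\ell})\big)^{1/2}=O(\|\bfM\|_F^2)$, which is your target bound.

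Your row-martingale route can be made to work, but it is less clean than you indicate: the $j$-th increment is not a bare quadratic form but the \emph{ratio} $\bfb_j^\top\bfP_i^{(j)}\bfM\bfP_i^{(j)}\bfb_j/\bfb_j^\top\bfP_i^{(j)}\bfb_j$, so extracting the $\|\bfM\|_F$ (rather than operator-norm) dependence requires linearizing the denominator, bounding $|\tr(\bfA^2\bfP_i^{(j)}\bfM\bfP_i^{(j)})|\lesssim\sqrt{n-i}\,\|\bfM\|_F$, and controlling negative moments of $\bfb_j^\top\bfP_i^{(j)}\bfb_j$. The paper's column decomposition avoids the ratio because $p_{i,kk}$ already has the bounded $1/(1+\cdot)$ form; its entrywise bound~\eqref{eq:nastypart} is also a reusable statement that does not involve $\bfM$. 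Conversely, your approach is more direct and, if pushed through, actually yields the slightly sharper $\Var(\tr\bfM\bfP_i)=O(i\|\bfM\|_F^2/(n-i)^2)$.
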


\begin{proof}
Let us first assume that 
\begin{equation}\label{eq:nastypart}
\max_{1\le i\le p-1} \max_{1\le k\le n} \E\big[(p_{i,kk}-\E[p_{i,kk}])^2\big] =O(1/n)\,, \qquad \nto\,.
\end{equation}
From Lemma~\ref{lem:orderSj} we know that 
\begin{align*}
\tr({\bfLambda}_{\bfQ_i}^2)-\E[\tr({\bfLambda}_{\bfQ_i}^2)])&= \frac{1}{T_i^2} \tr(\bfA^4 (\bfP_i-\E[\bfP_i]))\\
&= \frac{1}{T_i^2} \sum_{k=1}^n A_{kk}^4 (p_{i,kk}-\E[p_{i,kk}]),
\end{align*}
and by \eqref{eq:orderTi} we also have $C^{-1} (n-i) \le T_i \le C (n-i)$. In view of $\bfP_0=\bfI_n$, we may start the summation in \eqref{eq:fdfdd} (and \eqref{eq:fdfdd1}) at $i=1$. In what follows, the notation $a_n\lesssim b_n$ means $a_n\le c\, b_n$ for $n\in \N$ and some positive constant $c$ that does not depend on $i$ or $k$. Applications of Markov's and Lyapunov's inequalities yield for $\eta>0$,
\begin{align*}
\P&\Big(\Big|\sum_{i=1}^{p-1} \frac{1}{T_i^2} \sum_{k=1}^n A_{kk}^4 (p_{i,kk}-\E[p_{i,kk}])\Big| >\eta\Big) 
\le \frac{1}{\eta} \sum_{i=1}^{p-1} \frac{1}{T_i^2} \sum_{k=1}^n A_{kk}^4 \E\big[|p_{i,kk}-\E[p_{i,kk}]|\big]\\
&\lesssim \frac{1}{n^2 \eta} \sum_{i=1}^{p-1} \sum_{k=1}^n A_{kk}^4\sqrt{\E\big[(p_{i,kk}-\E[p_{i,kk}])^2\big] }
\lesssim \frac{1}{\eta} \max_{1\le i\le p-1} \max_{1\le k\le n} \sqrt{\E\big[(p_{i,kk}-\E[p_{i,kk}])^2\big] }
\end{align*}
and the right-hand side converges to zero by \eqref{eq:nastypart}. This establishes \eqref{eq:fdfdd}. 

Next, we turn to \eqref{eq:fdfdd1}. Recall from \eqref{eq:Q} that $\E[\tr(\bfQ_i)]=1$ and 
\begin{equation}\label{eq:trqi}
\tr(\bfQ_i)=\frac{1}{T_i} \sum_{k=1}^n A_{kk}^2 p_{i,kk}\,.
\end{equation}
Since $\tr(\bfP_i)=n-i$ is nonrandom and setting $a_k:= A_{kk}^2-\tr(\A^2)/n$, we have
\begin{align*}
\E\bigg[\Big(\sum_{i=1}^{p-1} (\tr(\bfQ_i)-1)\Big)^2\bigg]&= \Var\bigg(\sum_{i=1}^{p-1} \tr(\bfQ_i)\bigg)\\
&= \Var\bigg(\sum_{i=1}^{p-1} \frac{1}{T_i} \sum_{k=1}^n a_k \, p_{i,kk}\bigg) = \Var\bigg(\sum_{k=1}^n a_k S_k\bigg)\\
&\le \bigg(\sum_{k=1}^n a_k^2\bigg) \bigg(\sum_{k,\ell=1}^n \Cov^2(S_k,S_\ell)\bigg)^{1/2}\,,
\end{align*}
where the Cauchy-Schwarz inequality was used and $S_k:=\sum_{i=1}^{p-1} T_i^{-1} p_{i,kk}$.
By \eqref{eq:nastypart} and \eqref{eq:orderTi} we get for $k,\ell=1,\ldots,n$,
$$|\Cov(S_k,S_\ell)|\le \sum_{i,j=1}^{p-1} \frac{1}{T_iT_j} |\Cov(p_{i,kk},p_{i,\ell\ell})| \lesssim \sum_{i,j=1}^{p-1} n^{-2} n^{-1} \lesssim n^{-1}\,.$$
Therefore, we conclude that 
\begin{align*}
\E\bigg[\Big(\sum_{i=1}^{p-1} (\tr(\bfQ_i)-1)\Big)^2\bigg]
&\lesssim \sum_{k=1}^n a_k^2 
= \tr\Big( \A^2 - \frac{\tr(\A^2)}{n} \bfI_n\Big)^2 
\end{align*}
and the right-hand side\ tends to zero by assumption \eqref{eq:B2}, establishing the first convergence in \eqref{eq:fdfdd1}. For the second one similar considerations yield for $i=1,\ldots, p-1$ that
\begin{align*}
\Var\big(\tr(\bfQ_i)\big)
&= \frac{1}{T_i^2}\Var\bigg( \sum_{k=1}^n a_k \, p_{i,kk}\bigg) \\
&\le \frac{1}{T_i^2} \bigg(\sum_{k=1}^n a_k^2\bigg) \bigg(\sum_{k,\ell=1}^n \Cov^2(p_{i,kk},p_{i,\ell\ell})\bigg)^{1/2}\\
&\lesssim \frac{1}{n^2} \sum_{k=1}^n a_k^2 \lesssim \frac{1}{n^2}\,.
\end{align*}
In combination with the union bound and Markov's inequality this shows for any $\vep>0$ that
\begin{equation*}
\P\Big(\max_{i=0,\ldots,p-1} \big|\tr(\bfQ_i)-1\big|>\vep\Big)\le \frac{1}{\vep^2}  \sum_{i=1}^{p-1}\Var\big(\tr(\bfQ_i)\big) \lesssim \frac{1}{n\,\vep^2}
\end{equation*}
and so \eqref{eq:dfhdfdfd} and the second convergence in \eqref{eq:fdfdd1} hold.

Regarding \eqref{eq:lastdirt}, we get from \eqref{eq:trqi} that
\begin{align*}
(\tr(\bfQ_i))^2-\E\big[ (\tr(\bfQ_i))^2\big] &= \frac{1}{T_i^2} \sum_{k,\ell=1}^n A_{kk}^2A_{\ell \ell}^2 \big(p_{i,kk}p_{i,\ell \ell}-\E[p_{i,kk}p_{i,\ell \ell}]\big)\,.
\end{align*}
Using the identity $\tr(\bfP_i)=n-i$ and the definition of $\bfQ_i$, we have
\begin{align*}
&\frac{1}{T_i^2}\sum_{k,\ell=1}^n a_k a_{\ell} \big(p_{i,kk}p_{i,\ell \ell}-\E[p_{i,kk}p_{i,\ell \ell}]\big)\\
&=\frac{1}{T_i^2}\sum_{k,\ell=1}^n A_{kk}^2A_{\ell \ell}^2 \big(p_{i,kk}p_{i,\ell \ell}-\E[p_{i,kk}p_{i,\ell \ell}]\big) -2\frac{\tr(\A^2)}{n}\frac{(n-i)}{T_i^2} \sum_{k=1}^n A_{kk}^2 \big(p_{i,kk}-\E[p_{i,kk}]\big)\\
&= (\tr(\bfQ_i))^2-\E\big[ (\tr(\bfQ_i))^2 \big]
-2\frac{\tr(\A^2)}{n}\frac{(n-i)}{T_i} (\tr(\bfQ_i)-1)
\end{align*}
and therefore,
\begin{align}
&\sum_{i=1}^{p-1}\Big( (\tr(\bfQ_i))^2-\E\big[ (\tr(\bfQ_i))^2\big]\Big) \nonumber\\
&=\sum_{i=1}^{p-1}\frac{1}{T_i^2}\sum_{k,\ell=1}^n a_k a_{\ell} \big(p_{i,kk}p_{i,\ell \ell}-\E[p_{i,kk}p_{i,\ell \ell}]\big) 
+2\frac{\tr(\A^2)}{n} \sum_{i=1}^{p-1}\frac{(n-i)}{T_i} (\tr(\bfQ_i)-1)\,. \label{eq:dsfas}
\end{align}
Since $\tr(\A^2)(n-i)/(n T_i) \lesssim 1$, the same arguments as in the proof of the first convergence in \eqref{eq:fdfdd1} yield that the second term in \eqref{eq:dsfas} tends to zero in probability as $\nto$. 
The expectation of the absolute value of the first term in \eqref{eq:dsfas} is bounded by 
\begin{align*}
&\sum_{i=1}^{p-1}\frac{1}{T_i^2}\sum_{k,\ell=1}^n |a_k a_{\ell}| \E\big[ \big|p_{i,kk}p_{i,\ell \ell}-\E[p_{i,kk}p_{i,\ell \ell}]\big| \big]\\
&\le \sum_{i=1}^{p-1}\frac{1}{T_i^2} \bigg(\sum_{k=1}^n a_k^2\bigg) \bigg(\sum_{k,\ell=1}^n \E\big[ \big|p_{i,kk}p_{i,\ell \ell}-\E[p_{i,kk}p_{i,\ell \ell}]\big| \big]^2\bigg)^{1/2}\lesssim \sum_{i=1}^{p-1} n^{-2} \bigg(\sum_{k=1}^n a_k^2\bigg) n \to 0\,,
\end{align*}
as $\nto$, where we used Cauchy-Schwarz, $\big|p_{i,kk}p_{i,\ell \ell}-\E[p_{i,kk}p_{i,\ell \ell}]\big|\le 2$, see \eqref{eq:boundelements}, and assumption \eqref{eq:B2}. Thus, by Markov's inequality we obtain that the first term in \eqref{eq:dsfas} tends to zero in probability as $\nto$, which concludes the proof of \eqref{eq:lastdirt}.
\medskip

Finally, we turn to the proof of \eqref{eq:nastypart}. Denote the $k$-th column of $\bfN_{(i)}$ by $\bfw_{ik}$
and set $\bfB_{(i)}=\bfN_{(i)} \bfA$. Then the $k$-th column of $\bfB_{(i)}$ is $\bfv_{ik}=A_{kk} \bfw_{ik}$. 
In view of \eqref{eq:pikk}, we have
\begin{align*} 
p_{i,kk}
&= 1- v_{ik}^{\top} \big(B_{(i)}B_{(i)}^{\top}\big)^{-1}v_{ik}
= \frac{1}{1+ \bfw_{ik}^{\top} \bfM_{ik} \bfw_{ik}}\,,
\end{align*}
where we used the notation 
$$\bfM_{ik}:=A_{kk}^2 \bigg(\sum_{\ell=1; \ell \neq k}^n v_{i\ell} v_{i\ell}^{\top}\bigg)^{-1}=A_{kk}^2 \big(B_{(i,k)}B_{(i,k)}^{\top}\big)^{-1}\,$$
with $B_{(i,k)}$ denoting the matrix obtained from $B_{(i)}$ by deleting its $k$-th column $v_{ik}$.

Let $1\le i\le p-1$ and $1\le k\le n$. To simplify notation we will write $w$ instead of $\bfw_{ik}$. Multiple applications of the identity $$\frac{1}{1+x}-\frac{1}{1+y}=\frac{y-x}{(1+x)(1+y)}\,,\qquad x,y\ge 0\,,$$
give
\begin{align*}
&p_{i,kk}-\E[p_{i,kk}]=
\frac{1}{1+ w^{\top} \bfM_{ik} w}- \E\Big[ \frac{1}{1+ w^{\top} \bfM_{ik} w} \Big]\\
&= -\frac{1}{1+\tr \bfM_{ik}}+\frac{1}{1+w^{\top} \bfM_{ik} w}+
\E\Big[\frac{1}{1+\E[\tr \bfM_{ik}]}\Big]-\E\Big[\frac{1}{1+w^{\top} \bfM_{ik} w}\Big]\\
&\qquad-\frac{1}{1+\E[\tr \bfM_{ik}]}+\frac{1}{1+\tr \bfM_{ik}}\\
&= -\frac{w^{\top} \bfM_{ik} w-\tr \bfM_{ik}}{(1+w^{\top} \bfM_{ik} w)(1+\tr \bfM_{ik})}
+ \E\bigg[\frac{w^{\top} \bfM_{ik} w-\E[\tr \bfM_{ik}]}{(1+w^{\top} \bfM_{ik} w)(1+\E[\tr \bfM_{ik}])}  \bigg]\\
&\qquad -\frac{\tr \bfM_{ik}-\E[\tr \bfM_{ik}]}{(1+\tr \bfM_{ik})(1+\E[\tr \bfM_{ik}])}
\end{align*}
from which we easily deduce that
\begin{align*}
|p_{i,kk}-\E[p_{i,kk}]|&\le |w^{\top} \bfM_{ik} w-\tr \bfM_{ik}| + |\tr \bfM_{ik}-\E[\tr \bfM_{ik}]|\\
&\quad + \E[|w^{\top} \bfM_{ik} w-\tr \bfM_{ik}|] + \E[|\tr \bfM_{ik}-\E[\tr \bfM_{ik}]|].
\end{align*}
Using the triangle inequality as well as $(a+b)^2\le 2(a^2+b^2), a,b\in \R$, and Cauchy-Schwarz, we get
\begin{equation}\label{eq:sdfsds}
\E\big[(p_{i,kk}-\E[p_{i,kk}])^2\big]
\lesssim \E\big[|w^{\top} \bfM_{ik} w-\tr \bfM_{ik}|^2 \big]+\E\big[ |\tr \bfM_{ik}-\E[\tr \bfM_{ik}]|^2 \big].
\end{equation}
It remains to show that the terms on the right-hand side~ of \eqref{eq:sdfsds} are $O(1/n)$.  
Using that the fourth moment of the normal distribution is finite and $\norm{\bfA}\lesssim 1$, an application of \cite[Lemma B.26]{bai:silverstein:2010} yields for the first term that, 
\begin{align*}
\E\E \big[|w^{\top} \bfM_{ik} w-\tr \bfM_{ik}|^2\,\big| \bfM_{ik} \big]
\lesssim \E[\tr \bfM_{ik}^2]\lesssim 
\frac{n}{\lambda_{min}^2(\bfN \bfN^\top)}=O(n^{-1})\,, \qquad \nto\,,
\end{align*}
where the Cauchy interlacing theorem (see, for example, \cite[Corollary~III.1.5]{bhatia:1997}) was used to obtain the second inequality in the last line.
To bound the second term we define $\E_\ell[\cdot]:=\E\big[\cdot|\{v_{i\ell},\ldots, v_{in}\}\backslash \{v_{ik}\}\big]$ for $\ell=1,\ldots,n$ and $\E_{n+1}:=\E$. It holds
  \begin{eqnarray*}
&&    \tr\bfM_{ik} - \E\tr\bfM_{ik}=\sum_{\ell=1; \ell \neq k}^n(\E_\ell-\E_{\ell+1}) \tr\bfM_{ik}\nonumber\\
    &=& A_{kk}^2  \sum_{\ell=1; \ell \neq k}^n(\E_\ell-\E_{\ell+1})\left(\tr({B}_{(i,k)}{B}_{(i,k)}^\top)^{-1} - \tr({B}_{(i,k)}{B}_{(i,k)}^\top-{v}_{i\ell}{v}^\top_{i\ell})^{-1} \right)\,,
  \end{eqnarray*}
where we used $(\E_k-\E_{k+1}) \tr\bfM_{ik}=0$ and $(\E_\ell- \E_{\ell+1}) \tr({B}_{(i,k)}{B}_{(i,k)}^\top-{v}_{i\ell}{v}^\top_{i\ell})^{-1}=0$ for $\ell \neq k$.
Applying the Burkholder inequality for martingale differences, see e.g.\ \cite[Lemma 2.12]{bai:silverstein:2010}, we deduce
  \begin{eqnarray*}
  && \E|\tr\bfM_{ik} - \E\tr\bfM_{ik}|^2\nonumber\\
    &\lesssim& \sum_{\ell=1; \ell \neq k}^n\E\left|(\E_\ell-\E_{\ell+1})\left(\tr({B}_{(i,k)}{B}_{(i,k)}^\top)^{-1} - \tr({B}_{(i,k)}{B}_{(i,k)}^\top-{v}_{i\ell}{v}^\top_{i\ell})^{-1} \right)\right|^2\nonumber\\
    &\lesssim& \sum_{\ell=1; \ell \neq k}^n
		\E\left|(\E_\ell-\E_{\ell+1})\frac{{v}^\top_{i\ell}({B}_{(i,k)}{B}_{(i,k)}^\top-{v}_{i\ell}{v}^\top_{i\ell})^{-2}{v}_{i\ell}}{1+{v}^\top_{i\ell}({B}_{(i,k)}{B}_{(i,k)}^\top-{v}_{i\ell} {v}^\top_{i\ell})^{-1}{v}_{i\ell}}\right|^2\nonumber\\
&\lesssim&   \frac{1}{n^{2}}\sum_{\ell=1; \ell \neq k}^n \E \left|(\E_\ell+\E_{\ell+1}) \frac{{v}^\top_{i\ell}({B}_{(i,k)}{B}_{(i,k)}^\top-{v}_{i\ell}{v}^\top_{i\ell})^{-1}{v}_{i\ell}}{1+{v}^\top_{i\ell}({B}_{(i,k)}{B}_{(i,k)}^\top-{v}_{i\ell} {v}^\top_{i\ell})^{-1}{v}_{i\ell}}\right|^2\nonumber\\
    &\leq&\frac{2^2 (n-1) }{n^2}=O(n^{-1})\,, \qquad \nto\,,
  \end{eqnarray*}
where for the third line we used the Sherman-Morrison formula and for the fourth line the fact that the spectral norms $n \norm{({B}_{(i,k)}{B}_{(i,k)}^\top-{v}_{i\ell} {v}^\top_{i\ell})^{-1}}$ are uniformly bounded by a constant.
Since all our estimates were uniform in $i$ and $k$, the proof of \eqref{eq:nastypart} is complete.
\end{proof}

\section{Quadratic forms}\setcounter{equation}{0}
\begin{lemma}\cite[Lemma~1]{morales:johnstone:mckay:2021} \label{lem:johnstonelemma6}
Let $B$ be an $n\times n$ non-random symmetric matrix, $x,y\in \R^n$ random vectors of i.i.d.\ entries with mean zero, variance one, $\E|x_i|^l, \E|y_i|^l \le \nu_l$ and $\E[x_iy_i]=\rho$. Then, for any $s\ge 1$,
\begin{equation*}
\E\left| \frac{x^{\top} B y}{\|x\|_2 \|y\|_2}- \frac{\rho}{n} \tr B \right|^s
\le C_s \left[n^{-s} \big(\nu_{2s} \tr B^s + (\nu_4 \tr B^2)^{s/2} \big)+ \norm{B}^s\big(n^{-s/2} \nu_4^{s/2} +n^{-s+1} \nu_{2s}\big)  \right]\,,
\end{equation*}
where $\|\cdot \|_2$ denotes the Euclidean norm on $\R^n$ and $C_s$ is a constant depending only on $s$.
\end{lemma}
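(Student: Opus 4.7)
My plan is to decompose the error into a fluctuation of the unnormalized bilinear form and a fluctuation of the random normalizer, and then bound each piece separately. Concretely, I would write
\[
\frac{x^\top By}{\|x\|_2\|y\|_2} - \frac{\rho}{n}\tr B
= \underbrace{\frac{x^\top By - \rho \tr B}{\|x\|_2\|y\|_2}}_{=:T_1} + \underbrace{\rho \tr B\bigg(\frac{1}{\|x\|_2\|y\|_2} - \frac{1}{n}\bigg)}_{=:T_2}
\]
and use $(a+b)^s\le 2^{s-1}(|a|^s+|b|^s)$ to reduce to bounding $\E|T_1|^s$ and $\E|T_2|^s$ separately. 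Throughout I will split on the event $\mathcal{A}:=\{\|x\|_2^2\ge n/2,\; \|y\|_2^2\ge n/2\}$; on $\mathcal{A}^c$, Marcinkiewicz--Zygmund gives $\P(\mathcal{A}^c)\lesssim n^{-s'}$ for any $s'\ge 1$, which together with crude deterministic bounds on $T_1,T_2$ and Hölder makes the $\mathcal{A}^c$-contribution negligible compared with the target bound.

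For $T_1$ the key ingredient is a bilinear analogue of \cite[Lemma~B.26]{bai:silverstein:2010},
\[
\E\bigl|x^\top By - \rho \tr B\bigr|^s \lesssim \nu_4^{s/2}(\tr B^2)^{s/2} + \nu_{2s}\,\tr B^s,
\]
which I would derive by splitting
\[
x^\top By - \rho\tr B = \sum_{i=1}^n B_{ii}(x_iy_i - \rho) + \sum_{i\neq j} B_{ij}\,x_iy_j.
\]
The diagonal part is a sum of independent centered variables, so Rosenthal's inequality yields the two stated terms after noting $\E|x_iy_i-\rho|^s\lesssim \nu_{2s}$ (from $|x_iy_i|\le \tfrac12(x_i^2+y_i^2)$ and Cauchy--Schwarz). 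The off-diagonal part is handled by a sequential martingale/Burkholder argument, conditioning first on $(x_k)$ and then on $(y_k)$ and exploiting that the pairs $(x_i,y_i)$ are independent across $i$. On $\mathcal{A}$ the division contributes a factor $(2/n)^{s}$, producing the first group of terms in the lemma.

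For $T_2$ I would start from the algebraic identity
\[
\frac{1}{\|x\|_2\|y\|_2} - \frac{1}{n} = \frac{n^2 - \|x\|_2^2\|y\|_2^2}{n\,\|x\|_2\|y\|_2\,(n+\|x\|_2\|y\|_2)},
\]
expand $\|x\|_2^2\|y\|_2^2 - n^2 = (\|x\|_2^2 - n)(\|y\|_2^2 - n) + n(\|x\|_2^2-n)+n(\|y\|_2^2-n)$, and apply the classical moment bound $\E\bigl|\|x\|_2^2 - n\bigr|^s \lesssim n^{s/2}\nu_4^{s/2} + n\,\nu_{2s}$ (Rosenthal again, on $\sum_i(x_i^2-1)$) to obtain $\E\bigl|\|x\|_2^2\|y\|_2^2 - n^2\bigr|^s \lesssim n^{3s/2}\nu_4^{s/2} + n^{s+1}\nu_{2s}$. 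On $\mathcal{A}$ the denominator above is of order $n^3$ and $|\rho\tr B|\le n\|B\|$, so combining yields
\[
\E\bigl[|T_2|^s\,\mathbf{1}_{\mathcal{A}}\bigr]\lesssim \|B\|^s\, n^{-2s}\bigl(n^{3s/2}\nu_4^{s/2} + n^{s+1}\nu_{2s}\bigr) = \|B\|^s\bigl(n^{-s/2}\nu_4^{s/2} + n^{-s+1}\nu_{2s}\bigr),
\]
which is precisely the second group of terms in the lemma.

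I expect the main obstacle to be establishing the bilinear moment bound with the stated shape, because the pairs $(x_i,y_i)$ are dependent (with $\E[x_iy_i]=\rho$) while being independent across $i$: the off-diagonal Burkholder step requires careful bookkeeping to exploit coordinate-wise independence without losing the per-coordinate dependence, and the separation into $\nu_4^{s/2}(\tr B^2)^{s/2}$ versus $\nu_{2s}\tr B^s$ reflects the usual Rosenthal dichotomy between the Gaussian and heavy-tailed regimes. Once this bilinear bound is established, passing to $\E|T_1|^s$, handling $\E|T_2|^s$, and absorbing the $\mathcal{A}^c$-contribution reduce to routine Hölder arguments and standard norm-concentration estimates.
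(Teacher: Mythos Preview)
The paper does not prove this lemma at all: it is quoted verbatim from \cite[Lemma~1]{morales:johnstone:mckay:2021} and stated without proof in the appendix, so there is no ``paper's own proof'' to compare against. Your outline is essentially the standard route (and, as far as one can tell, the one in the cited source): split off the normalizer, control the unnormalized bilinear form $x^\top By-\rho\tr B$ by the Bai--Silverstein/Rosenthal mechanism, and treat the normalizer fluctuation via Rosenthal on $\|x\|_2^2-n$.

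One small point to tighten. Your claim that ``Marcinkiewicz--Zygmund gives $\P(\mathcal{A}^c)\lesssim n^{-s'}$ for any $s'\ge 1$'' is stronger than the hypotheses allow: only $\nu_4$ and $\nu_{2s}$ are assumed finite, so you only get
\[
\P(\mathcal{A}^c)\;\le\; \P\big(|\|x\|_2^2-n|>n/2\big)+\P\big(|\|y\|_2^2-n|>n/2\big)\;\lesssim\; n^{-s/2}\nu_4^{s/2}+n^{-s+1}\nu_{2s}
\]
from Rosenthal. This is still enough, because the \emph{entire} left-hand side satisfies the deterministic bound
\[
\Big|\tfrac{x^\top By}{\|x\|_2\|y\|_2}-\tfrac{\rho}{n}\tr B\Big|\le \|B\|+|\rho|\,\|B\|\le 2\|B\|,
\]
so the $\mathcal{A}^c$-contribution is at most $(2\|B\|)^s\P(\mathcal{A}^c)$, which lands exactly in the second group of terms. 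Note that bounding $T_1$ and $T_2$ \emph{separately} on $\mathcal{A}^c$ is awkward (the denominator $\|x\|_2\|y\|_2$ in $T_1$ can be arbitrarily small), so it is cleaner to bound the full quantity on $\mathcal{A}^c$ as above rather than the pieces. With that adjustment your sketch goes through.
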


\begin{lemma}\cite[Theorem b)]{wiens:1992} \label{lem:quf}
  Let $\bfz=(Z_1,\ldots,Z_n)^\top$ be a random vector such that, for all nonnegative integers $m_1,\ldots, m_4$ with $m_1+\cdots+m_4\le 4$, $\E[Z_{i_1}^{m_1}Z_{i_2}^{m_2} \cdots Z_{i_4}^{m_4}]$ is 
(i) finite;
(ii) zero if any $m_i$ is odd; and
(iii) invariant under permutations of the indices.
Let $\beta_{2,2}=\E[Z_1^2Z_2^2], \beta_4=\E[Z_1^4]$. 	
 Then we have for any real-valued and symmetric $n\times n$ nonrandom matrices $\bfA$, $\bfB$ that
\begin{equation*}
	\E[\bfz^{\top} \bfA \bfz \bfz^{\top} \bfB \bfz]=  \beta_{2,2} [\tr \bfA \tr \bfB +2 \tr(\bfA \bfB)] + (\beta_4-3\beta_{2,2}) \tr(\bfA \circ \bfB)\,,
  \end{equation*}
  	where $\circ$ denotes the Hadamard product.
\end{lemma}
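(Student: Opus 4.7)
\textbf{Proof proposal for Lemma~\ref{lem:quf}.}
The plan is a direct entrywise expansion of the quartic form followed by a careful case analysis of the fourth joint moments of $\bfz$. First I would write
\begin{equation*}
\bfz^{\top} \bfA \bfz \cdot \bfz^{\top} \bfB \bfz = \sum_{i,j,k,l=1}^n A_{ij} B_{kl} \, Z_i Z_j Z_k Z_l
\end{equation*}
and interchange expectation and the finite sum using condition (i). Conditions (ii) and (iii) then force $\E[Z_i Z_j Z_k Z_l]$ to depend only on the multiset structure of the index tuple $(i,j,k,l)$ and to vanish whenever any index appears an odd number of times. Consequently, the only tuples that contribute are those in which either all four indices coincide, or the four indices split into two distinct pairs.

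Next I would enumerate these patterns explicitly. The diagonal case $i=j=k=l$ contributes $\beta_4 A_{ii} B_{ii}$. The paired cases come in exactly three flavors: $(i=j,\,k=l,\,i\neq k)$, $(i=k,\,j=l,\,i\neq j)$, and $(i=l,\,j=k,\,i\neq j)$, and each contributes a factor $\beta_{2,2}$. Using the symmetry $B_{kl}=B_{lk}$, the three resulting off-diagonal double sums collapse into combinations of $\tr\bfA\,\tr\bfB$, $\tr(\bfA\bfB)$, and the Hadamard trace $\tr(\bfA\circ\bfB)$ via the elementary identities
\begin{align*}
\sum_{i\neq k} A_{ii} B_{kk} &= \tr\bfA\,\tr\bfB - \tr(\bfA \circ \bfB),\\
\sum_{i\neq j} A_{ij} B_{ij} = \sum_{i\neq j} A_{ij} B_{ji} &= \tr(\bfA\bfB) - \tr(\bfA \circ \bfB).
\end{align*}
Adding the diagonal term $\beta_4\tr(\bfA\circ\bfB)$ and simplifying yields exactly the stated identity $\beta_{2,2}[\tr\bfA\tr\bfB+2\tr(\bfA\bfB)]+(\beta_4-3\beta_{2,2})\tr(\bfA\circ\bfB)$.

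The one genuine hazard is bookkeeping on the diagonal: the three two-pair patterns all degenerate to the case $i=j=k=l$, so a naive inclusion would triple-count those terms with weight $\beta_{2,2}$ instead of weighting them once with $\beta_4$. I would handle this by keeping the three two-pair sums restricted to distinct pairs (as written above) and treating the fully diagonal contribution as a separate summand; this is exactly what produces the correction factor $\beta_4-3\beta_{2,2}$ in front of $\tr(\bfA\circ\bfB)$. No analytic input is needed beyond the moment assumptions and symmetry of $\bfB$, so I would expect the proof to be essentially combinatorial.
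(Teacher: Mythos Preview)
Your argument is correct: the entrywise expansion, the reduction via conditions (ii)--(iii) to the four index patterns, and the bookkeeping that produces the $(\beta_4-3\beta_{2,2})\tr(\bfA\circ\bfB)$ correction all go through exactly as you describe. Note, however, that the paper does not give its own proof of this lemma at all; it is simply quoted from \cite[Theorem~b)]{wiens:1992}, so there is nothing to compare against beyond the original reference, whose argument is essentially the same direct moment computation you outline.
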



\begin{thebibliography}{10}

\bibitem{ABGK21}
{\sc {Alonso-Guti{\'e}rrez}, D., {Besau}, F., {Grote}, J., {Kabluchko}, Z.,
  {Reitzner}, M., {Th{\"a}le}, C., {Vritsiou}, B.-H., and {Werner}, E.}
\newblock {Asymptotic normality for random simplices and convex bodies in high
  dimensions}.
\newblock {\em Proc. Amer. Math. Soc. 149\/} (2021), 355--367.

\bibitem{bai:silverstein:2010}
{\sc Bai, Z., and Silverstein, J.~W.}
\newblock {\em Spectral Analysis of Large Dimensional Random Matrices},
  second~ed.
\newblock Springer Series in Statistics. Springer, New York, 2010.

\bibitem{BaoPanZhou2015}
{\sc Bao, B., Pan, G., and Zhou, W.}
\newblock Universality for the largest eigenvalue of sample covariance matrices
  with general population.
\newblock {\em Annals of Statistics 43}, 1 (2015), 382--421.

\bibitem{bao:lin:pan:zhou:2015}
{\sc Bao, Z., Lin, L.-C., Pan, G., and Zhou, W.}
\newblock Spectral statistics of large dimensional {S}pearman's rank
  correlation matrix and its application.
\newblock {\em Ann. Statist. 43}, 6 (2015), 2588--2623.

\bibitem{bao:pan:zhou:2015}
{\sc Bao, Z., Pan, G., and Zhou, W.}
\newblock The logarithmic law of random determinant.
\newblock {\em Bernoulli 21}, 3 (2015), 1600--1628.

\bibitem{Bar07}
{\sc B{\'a}r{\'a}ny, I.}
\newblock { Random polytopes, convex bodies, and approximation}.
\newblock In {\em {Stochastic Geometry}}, W.~Weil, Ed., vol.~1892 of {\em {
  Lecture Notes in Mathematics}}. Springer, Berlin, Heidelberg, 2007.

\bibitem{bhatia:1997}
{\sc Bhatia, R.}
\newblock {\em Matrix Analysis}, vol.~169 of {\em Graduate Texts in
  Mathematics}.
\newblock Springer-Verlag, New York, 1997.

\bibitem{dette:doernemann:2020}
{\sc Dette, H., and D\"{o}rnemann, N.}
\newblock Likelihood ratio tests for many groups in high dimensions.
\newblock {\em J. Multivariate Anal. 178\/} (2020), 104605, 16.

\bibitem{doernemann:2023}
{\sc D\"{o}rnemann, N.}
\newblock Likelihood ratio tests under model misspecification in high
  dimensions.
\newblock {\em J. Multivariate Anal. 193\/} (2023), Paper No. 105122, 20.

\bibitem{EK17}
{\sc Eichelsbacher, P., and Knichel, L.}
\newblock Fine asymptotics for models with {G}amma type moments.
\newblock {\em Random Matrices Theory Appl. 10}, 1 (2021), Paper No. 2150007,
  51.

\bibitem{GirkoCLT79}
{\sc Girko, V.~L.}
\newblock A central limit theorem for random determinants.
\newblock {\em Teor. Veroyatnost. i Primenen. 24}, 4 (1979), 728--740.

\bibitem{GirkoBook}
{\sc Girko, V.~L.}
\newblock {\em Theory of Random Determinants}, vol.~45 of {\em Mathematics and
  its Applications (Soviet Series)}.
\newblock Kluwer Academic Publishers Group, Dordrecht, 1990.
\newblock Translated from the Russian.

\bibitem{GKT17}
{\sc Grote, J., Kabluchko, Z., and Th\"ale, C.}
\newblock Limit theorems for random simplices in high dimensions.
\newblock {\em ALEA Lat. Am. J. Probab. Math. Stat. 16}, 1 (2019), 141--177.

\bibitem{hall:heyde:1980}
{\sc Hall, P., and Heyde, C.~C.}
\newblock {\em Martingale limit theory and its application}.
\newblock Probability and Mathematical Statistics. Academic Press, Inc.
  [Harcourt Brace Jovanovich, Publishers], New York-London, 1980.

\bibitem{heiny:johnston:prochno:2022}
{\sc Heiny, J., Johnston, S., and Prochno, J.}
\newblock Thin-shell theory for rotationally invariant random simplices.
\newblock {\em Electron. J. Probab. 27\/} (2022), Paper No. 2, 41.

\bibitem{heiny:mikosch:2017:corr}
{\sc Heiny, J., and Mikosch, T.}
\newblock Almost sure convergence of the largest and smallest eigenvalues of
  high-dimensional sample correlation matrices.
\newblock {\em Stochastic Processes and their Applications 128}, 8 (2018),
  2779--2815.

\bibitem{heiny:parolya:2023}
{\sc Heiny, J., and Parolya, N.}
\newblock Log determinant of large correlation matrices under infinite fourth
  moment.
\newblock {\em Annales de l'institut Henri Poincare (B) Probability and
  Statistics (forthcoming), arXiv:2112.15388\/} (2023).

\bibitem{Hug13}
{\sc Hug, D.}
\newblock Random polytopes.
\newblock In {\em { Stochastic Geometry, Spatial Statistics and Random
  Fields}}, E.~Spodarev, Ed., vol.~2068 of {\em { Lecture Notes in
  Mathematics}}. Springer, Berlin, Heidelberg, 2013.

\bibitem{jiang:yang:2013}
{\sc Jiang, T., and Yang, F.}
\newblock Central limit theorems for classical likelihood ratio tests for
  high-dimensional normal distributions.
\newblock {\em Ann. Statist. 41}, 4 (2013), 2029--2074.

\bibitem{Mae80}
{\sc Maehara, H.}
\newblock On random simplices in product distributions.
\newblock {\em J. Appl. Probab. 17}, 2 (1980), 553--558.

\bibitem{Math82}
{\sc Mathai, A.~M.}
\newblock On a conjecture in geometric probability regarding asymptotic
  normality of a random simplex.
\newblock {\em Ann. Probab. 10}, 1 (1982), 247–251.

\bibitem{mohammadi:2016}
{\sc Mohammadi, M.}
\newblock On the bounds for diagonal and off-diagonal elements of the hat
  matrix in the linear regression model.
\newblock {\em REVSTAT Statistical Journal 14\/} (2016), 75--87.

\bibitem{morales:johnstone:mckay:2021}
{\sc Morales-Jimenez, D., Johnstone, I.~M., McKay, M.~R., and Yang, J.}
\newblock Asymptotics of eigenstructure of sample correlation matrices for
  high-dimensional spiked models.
\newblock {\em Statist. Sinica 31}, 2 (2021), 571--601.

\bibitem{NguyenVuDet}
{\sc Nguyen, H.~H., and Vu, V.}
\newblock Random matrices: law of the determinant.
\newblock {\em Ann. Probab. 42}, 1 (2014), 146--167.

\bibitem{Paindaveine}
{\sc Paindaveine, D.}
\newblock On the measure of anchored {G}aussian simplices, with applications to
  multivariate medians.
\newblock {\em Bernoulli 28}, 2 (2022), 965--996.

\bibitem{PP12}
{\sc Paouris, G., and Pivovarov, P.}
\newblock A probabilistic take on isoperimetric-type inequalities.
\newblock {\em Adv. Math. 230\/} (2012), 1402--1422.

\bibitem{PP13}
{\sc Paouris, G., and Pivovarov, P.}
\newblock Small-ball probabilities for the volume of random convex sets.
\newblock {\em Discrete Comput. Geom. 49\/} (2013), 601--646.

\bibitem{parolya:heiny:kurowicka:2021}
{\sc Parolya, N., Heiny, J., and Kurowicka, D.}
\newblock Logarithmic law of large random correlation matrix.
\newblock {\em Bernoulli (forthcoming), arXiv:2103.13900\/} (2023).

\bibitem{petrov:1975}
{\sc Petrov, V.~V.}
\newblock {\em Sums of independent random variables}.
\newblock Springer-Verlag, New York-Heidelberg, 1975.
\newblock Translated from the Russian by A. A. Brown, Ergebnisse der Mathematik
  und ihrer Grenzgebiete, Band 82.

\bibitem{ReitznerSurvey}
{\sc Reitzner, M.}
\newblock Random polytopes.
\newblock In {\em New perspectives in stochastic geometry}. Oxford Univ. Press,
  Oxford, 2010, pp.~45--76.

\bibitem{Rub77}
{\sc Ruben, H.}
\newblock The volume of a random simplex in an $n$-ball is asymptotically
  normal.
\newblock {\em J. Appl. Probability 14}, 3 (1977), 647--653.

\bibitem{SW08}
{\sc Schneider, R., and Weil, W.}
\newblock {\em Stochastic and {I}ntegral {G}eometry}.
\newblock Springer, Berlin, 2008.

\bibitem{SM50}
{\sc Sherman, J., and Morrison, W.~J.}
\newblock Adjustment of an inverse matrix corresponding to a change in one
  element of a given matrix.
\newblock {\em Ann. Math. Statistics 21\/} (1950), 124--127.

\bibitem{shilov:2012}
{\sc Shilov, G.}
\newblock {\em Linear Algebra}, first~ed.
\newblock Dover publications. Newburyport: Dover Publications, 2012.

\bibitem{TaoVuCLTWigner}
{\sc Tao, R., and Vu, V.}
\newblock A central limit theorem for the determinant of a {W}igner matrix.
\newblock {\em Adv. Math. 231}, 1 (2012), 74--101.

\bibitem{wang:han:pan:2018}
{\sc Wang, X., Han, X., and Pan, G.}
\newblock The logarithmic law of sample covariance matrices near singularity.
\newblock {\em Bernoulli 24}, 1 (2018), 80--114.

\bibitem{wiens:1992}
{\sc Wiens, D.~P.}
\newblock On moments of quadratic forms in non-spherically distributed
  variables.
\newblock {\em Statistics 23}, 3 (1992), 265--270.

\end{thebibliography}

\end{document}